\newtheorem{formula}{}[section]
\newtheorem{proposition}[formula]{Proposition}
\newtheorem{corollary}[formula]{Corollary}
\newtheorem{theorem}[formula]{Theorem}
\theoremstyle{definition}
\newtheorem{definition}[formula]{Definition}
\newtheorem{example}[formula]{Example}
\theoremstyle{remark}
\newtheorem*{remark}{Remark}
\begin{document}
\title[Adjoint Cohomology]
{Adjoint cohomology of graded Lie algebras of maximal class}
\author{Dmitri V. Millionschikov}
\thanks{Partially supported by
the Russian Foundation for Fundamental Research, grant no.
05-01-01032 and Scietific Schools 2185.2003.1} \subjclass{17B30,
17B56, 17B70, 53D}
\address{Department of Mathematics and Mechanics, Moscow
State University, 119899 Moscow, RUSSIA}

\begin{abstract}
We compute explicitly the adjoint cohomology of two ${\mathbb
N}$-graded Lie algebras of maximal class (infinite dimensional
filiform Lie algebras) ${\mathfrak m}_0$ and ${\mathfrak m}_2$. It
is known that up to an isomorphism there are only three ${\mathbb
N}$-graded Lie algebras of the maximal class. The third algebra
from this list is  the "positive" part $L_1$ of the Witt (or
Virasoro) algebra and its adjoint cohomology was computed earlier
by Feigin and Fukhs. We show that the total space $H^*({\mathfrak
m}_j,{\mathfrak m}_j)$ is "almost" isomorphic to the completed
tensor product ${\mathfrak m}_j \otimes H^*({\mathfrak m}_j)$,
$j=0,2$.
\end{abstract}
\date{}

\maketitle

\section*{Introduction}

A.~Shalev and E.~Zelmanov defined in \cite{ShZ} the {\it coclass}
(which might be infinity) of a finitely generated and residually
nilpotent Lie algebra $\mathfrak{g}$ as $cc(\mathfrak{g})=
\sum_{i\ge 1}(\dim (C^i\mathfrak{g}/C^{i+1}\mathfrak{g})-1)$,
where $C^i\mathfrak{g}$ denotes the $i$-th ideal of the central
descending series of $\mathfrak g$. Algebras of coclass $1$ are
also called algebras of {\it maximal class}. In finite dimensional
case the notion of Lie algebra of maximal class is equivalent to
the notion of {\it filiform} Lie algebra introduced by Vergne in
\cite{V}. In the study of filiform Lie algebras the ${\mathbb
N}$-graded filiform Lie algebra $\mathfrak{m}_0(n)$ plays a
special role. It is defined by its basis $e_1,\dots,e_n$ and its
non-trivial commutator relations: $[e_1,e_i]=e_{i+1},
i=2,\dots,n{-}1$. The natural infinite dimensional analog of
$\mathfrak{m}_0(n)$ is a ${\mathbb N}$-graded Lie algebra
$\mathfrak{m}_0$ of maximal class. It follows from \cite{Fial}
(see also \cite{ShZ}) that up to an isomorphism there are only
three ${\mathbb N}$-graded Lie algebras of maximal class (or
infinite dimensional filiform Lie algebras):
$$
\mathfrak{m}_0, \; \mathfrak{m}_2, \; L_1,
$$
where $L_1$ denotes the "positive" part of the Witt or Virasoro
algebra and $\mathfrak{m}_2$ is defined by its infinte basis
$e_1,e_2,\dots,e_n,\dots$ and the structure relations:
$$
[e_1,e_i]=e_{i+1}, i=2,\dots, \quad [e_2,e_j]=e_{j+2}, j=3,\dots.
$$
We recall that the scalar cohomology $H^*(L_1)$ was calculated in
\cite{G}. The cohomology $H^*(\mathfrak{m}_0)$ and
$H^*(\mathfrak{m}_2)$ were calculated in \cite{FialMill}.

The next natural question is to calculate the cohomology of these
algebras with coefficients in the adjoint representation, which is
very important due to applications in the deformation theory. The
adjoint cohomology $H^*(L_1,L_1)$ was calculated in \cite{FeFu}.
In the present article we explicitly compute the cohomology
$H^*(\mathfrak{m}_0,\mathfrak{m}_0)$ and
$H^*(\mathfrak{m}_2,\mathfrak{m}_2)$. One has to remark that the
first cohomology spaces $H^1(\mathfrak{m}_0(n),\mathfrak{m}_0(n))$
and $H^1(\mathfrak{m}_2(n),\mathfrak{m}_2(n))$ were computed in
\cite{V,Kh} and this result implies the answer in the infinite
dimensional case. The structure of
$H^1(\mathfrak{m}_0,\mathfrak{m}_0)$ and
$H^1(\mathfrak{m}_2,\mathfrak{m}_2)$ was rediscovered recently in
\cite{FialWag, FialWag2}. Vergne's algorithm for constructing of a
basis of $H^2(\mathfrak{m}_0(n),\mathfrak{m}_0(n))$ can be easily
generalized to the infinite dimensional case.

The paper is organized as follows. In Sections 1--2 we review all
necessary definitions and facts concerning Lie algebra cohomology
and Lie algebras of maximal class. We recall a natural filtration
of the adjoint complex $C^*(\mathfrak{g},\mathfrak{g})$ and the
corresponding spectral sequence $E_r$ for an arbitrary  ${\mathbb
N}$-graded Lie algebra $\mathfrak g$. This spectral sequence was
successfully used by Feigin and Fukhs \cite{FeFu} for the
computation of $H^*(L_1,L_1)$. This spectral sequence is the main
technical tool that we use for the computation of
$H^*(\mathfrak{m}_0,\mathfrak{m}_0)$ and
$H^*(\mathfrak{m}_2,\mathfrak{m}_2)$. The first term of this
spectral sequence is isomorphic to the completed tensor product
${\mathfrak g} \otimes H^*({\mathfrak g})$, where $H^*({\mathfrak
g})$ stands for the scalar cohomology of ${\mathfrak g}$. In some
sense we show that the differentials of higher orders of our
spectral sequence are "almost" trivial.

In the sections 3,5 we recall the structure of the cohomology
$H^*(\mathfrak{m}_0)$ and $H^*(\mathfrak{m}_2)$ with trivial
coefficients obtained in \cite{FialMill}.

In the sections 4,6 we apply the spectral sequence considered
above to the computation of $H^*(\mathfrak{m}_0,\mathfrak{m}_0)$
and $H^*(\mathfrak{m}_2,\mathfrak{m}_2)$ respectively. Namely we
give the explicit answer in terms of formal series of the infinite
basis of cocycles $\{\Psi_{I,r}\}$ in the case of $\mathfrak{m}_0$
and $\{\Phi_{J,q}\}$ for the algebra $\mathfrak{m}_2$.

\section{Lie algebras of maximal class and filiform Lie algebras}

The sequence of ideals of a Lie algebra $\mathfrak{g}$
$$C^1\mathfrak{g}=\mathfrak{g} \; \supset \;
C^2\mathfrak{g}=[\mathfrak{g},\mathfrak{g}] \; \supset \; \dots
\; \supset \;
C^{k}\mathfrak{g}=[\mathfrak{g},C^{k-1}\mathfrak{g}] \; \supset
\; \dots$$
is called the descending central sequence of $\mathfrak{g}$.

A Lie algebra $\mathfrak{g}$ is called nilpotent if
there exists $s$ such that:
$$C^{s+1}\mathfrak{g}=[\mathfrak{g}, C^{s}\mathfrak{g}]=0,
\quad C^{s}\mathfrak{g} \: \ne 0.$$ The natural number $s$ is
called the nil-index of the nilpotent Lie algebra $\mathfrak{g}$.

\begin{definition}
A nilpotent $n$-dimensional Lie algebra $\mathfrak{g}$ is  called
filiform Lie algebra if it has the nil-index $s=n-1$.
\end{definition}

\begin{example}
The Lie algebra $\mathfrak{m}_0(n)$ is defined by its basis $e_1,
e_2, \dots, e_n$ with the commutating relations:
$$ [e_1,e_i]=e_{i+1}, \; \forall \; 2\le i \le n{-}1.$$
\end{example}

\begin{proposition}
Let $\mathfrak{g}$ be a filiform Lie algebra, one can remark that
$$\sum_{i\ge 1}(\dim (C^i\mathfrak{g}/C^{i+1}\mathfrak{g})-1)=1,$$
where in the sum only the first summand is non trivial.
\end{proposition}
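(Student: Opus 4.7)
The plan is a dimension count on the descending central series of $\mathfrak{g}$, once I verify that this series strictly descends. The key preliminary step is to show that for any nilpotent Lie algebra, $C^i\mathfrak{g} = C^{i+1}\mathfrak{g}$ forces $C^i\mathfrak{g} = 0$: the recursion $C^{j+1}\mathfrak{g} = [\mathfrak{g}, C^j\mathfrak{g}]$ propagates the equality to all larger $j$, and nilpotency eventually drives this common value to zero. Applied to a filiform $\mathfrak{g}$ of dimension $n$ and nil-index $s = n - 1$, this yields the strictly decreasing chain
\[
\mathfrak{g} = C^1\mathfrak{g} \supsetneq C^2\mathfrak{g} \supsetneq \cdots \supsetneq C^{n-1}\mathfrak{g} \supsetneq C^n\mathfrak{g} = 0,
\]
so the integers $k_i := \dim(C^i\mathfrak{g}/C^{i+1}\mathfrak{g})$ satisfy $k_i \ge 1$ for $1 \le i \le n-1$, $k_i = 0$ for $i \ge n$, and $\sum_{i=1}^{n-1} k_i = \dim \mathfrak{g} = n$. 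Having $n-1$ positive integers with total $n$ forces exactly one of them to equal $2$ and the rest to equal $1$, so in any case $\sum_{i \ge 1}(k_i - 1) = n - (n-1) = 1$.

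It remains to locate the exceptional index at $i = 1$. For $n \le 2$ the filiform condition makes $\mathfrak{g}$ abelian and the statement is trivial. For $n \ge 3$ I would argue by contradiction: if $k_1 = 1$, then $\dim \mathfrak{g}/[\mathfrak{g},\mathfrak{g}] = 1$, and a standard Nakayama-type lemma for nilpotent Lie algebras implies that any lift of a basis of $\mathfrak{g}/[\mathfrak{g},\mathfrak{g}]$ generates $\mathfrak{g}$ as a Lie algebra; a one-element generating set makes $\mathfrak{g}$ abelian and its nil-index equal to $1$, contradicting $n - 1 \ge 2$. Therefore $k_1 = 2$, and $k_1 - 1 = 1$ is the unique nonzero term in the sum.

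The main obstacle, if one can call it that, is the Nakayama-type identification that the abelianization controls the minimal number of generators of a nilpotent Lie algebra. Everything else is a telescoping dimension count on the graded pieces of the descending central series, driven entirely by the two inputs $\dim \mathfrak{g} = n$ and nil-index $n-1$.
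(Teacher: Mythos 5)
Your proof is correct: the paper states this proposition without any proof at all (it is offered as a remark), and your argument --- strict descent of the lower central series for nilpotent algebras, the telescoping count $\sum_i k_i = n$ over $n-1$ positive layers, and the Nakayama-type generation lemma to rule out $k_1 = 1$ --- is the standard way to fill in the details. The only point worth noting is that the sum $\sum_{i\ge 1}$ must implicitly be taken over the indices with $C^i\mathfrak{g}\ne 0$ (otherwise the tail contributes $-1$ forever); you adopt this convention silently, as does the paper.
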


\begin{definition}
A Lie algebra $\mathfrak{g}$ is called residually nilpotent if
$$\cap_{i=1}^{\infty}C^{s}\mathfrak{g}=0.$$
\end{definition}

\begin{definition}
A coclass of Lie algebra $\mathfrak{g}$ (which might be infinity)
is a number $cc(\mathfrak{g})$ defined as $cc(\mathfrak{g})=
\sum_{i\ge 1}(\dim (C^i\mathfrak{g}/C^{i+1}\mathfrak{g})-1)$.
Algebras of coclass $1$ are also called algebras of maximal class
or infinite dimensional filiform Lie algebras.
\end{definition}

\begin{example}
Let us define the algebra $L_k$ as the infinite-dimensional Lie algebra of
polynomial vector fields
on the real line ${\mathbb R}^1$ with a zero in $x=0$ of
order not less then $k+1$.

The algebra   $L_k$ can be defined by its infinite basis and  commutating relations
$$e_i=x^{i+1}\frac{d}{dx}, \; i \in {\mathbb N},\; i \ge k, \quad \quad
[e_i,e_j]= (j-i)e_{i{+}j}, \; \forall \;i,j \in {\mathbb N}.$$

$L_1$ is a residual nilpotent Lie algebra  of maximal class
generated by $e_1$ and $e_2$.
\end{example}

We recall that ${\mathbb Z}$-graded Lie algebra $W$, defined by
the basis $e_i,\; i \in {\mathbb Z}$, and relations
$$[e_i.e_j]=(j-i)e_{i+j} \quad \forall i,j \in {\mathbb Z},$$
is called the Witt algebra \cite{Fu}. Hence, the algebra $L_1$ is
the positive part $W_+=\oplus_{i>0}(W)_i$ of the Witt algebra.

One can consider another examples of algebras of maximal class
that are ${\mathbb N}$-graded Lie algebras.
\begin{example}
The Lie algebra $\mathfrak{m}_0$ is defined by its infinite basis
$e_1, e_2, \dots, e_n, \dots $ with commutator relations:
$$ \label{m_0}
[e_1,e_i]=e_{i+1}, \; \forall \; i \ge 2.$$
\end{example}

\begin{example}
The Lie algebra $\mathfrak{m}_2$ is defined by its infinite basis
$e_1, e_2, \dots, e_n, \dots $ and commutator relations:
$$
[e_1, e_i ]=e_{i+1}, \quad \forall \; i \ge 2; \quad \quad [e_2,
e_j ]=e_{j+2}, \quad \forall \; j \ge 3.
$$
\end{example}

\begin{theorem}[\cite{Fial},\cite{ShZ}]
Let $\mathfrak{g}=\bigoplus_{i=1}^{\infty}\mathfrak{g}_{i}$ be a
${\mathbb N}$-graded Lie algebra of maximal class. Then
$\mathfrak{g}$ is isomorphic to one (and only one) Lie algebra
from the three given ones:
$$\mathfrak{m}_0, \; \mathfrak{m}_2, \; L_1.$$
\end{theorem}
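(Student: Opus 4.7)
My plan is to use the $\mathbb{N}$-grading to parametrize $\mathfrak{g}$ by structure constants, normalize them via rescaling, and classify the admissible solutions of the Jacobi identity.

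The first step is a structural reduction: show that $\dim \mathfrak{g}_i = 1$ for every $i \ge 1$ and that $\mathfrak{g}$ is generated by $e_1 \in \mathfrak{g}_1$ and $e_2 \in \mathfrak{g}_2$. Since each $C^k\mathfrak{g}$ is a graded ideal contained in $\bigoplus_{i \ge k}\mathfrak{g}_i$, the quotients $C^k/C^{k+1}$ inherit an $\mathbb{N}$-grading. The coclass equation, together with residual nilpotence (which forces infinitely many nonzero quotients), forces exactly one quotient to have dimension $2$ and all others dimension $1$; a dimension count in each graded degree then pins the $\mathfrak{g}_i$ down and locates the two generators in degrees $1$ and $2$.

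Next I would fix $0 \neq e_i \in \mathfrak{g}_i$, write $[e_i, e_j] = c_{ij}\, e_{i+j}$, and set $a_j := c_{1j}$, $b_j := c_{2j}$. The Jacobi identity on triples $(e_1, e_i, e_j)$ gives a first-order recurrence in $i$ that expresses every $c_{ij}$ in terms of the two sequences $\{a_k\}$ and $\{b_k\}$. In the non-degenerate regime where $a_j \neq 0$ for every $j$, the grading-preserving rescaling $e_i \mapsto \lambda_i e_i$ can be chosen to enforce $a_j \equiv 1$; the recurrence then collapses to $c_{ij} = \sum_{m=0}^{i-2}(-1)^m \binom{i-2}{m} b_{j+m}$, so every structure constant is an iterated finite difference of the single sequence $\{b_j\}$.

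The remaining Jacobi identities on triples $(e_i, e_j, e_k)$ with $i,j,k \ge 2$ then impose nonlinear polynomial equations on $\{b_j\}$. A patient analysis of these equations, modulo the residual two-parameter rescaling of $e_1$ and $e_2$, shows that the admissible sequences fall into exactly three orbits: $b_j \equiv 0$ (giving $\mathfrak{m}_0$), $b_j$ a nonzero constant (after rescaling, giving $\mathfrak{m}_2$), and $b_j$ proportional to $(j-2)/(j(j-1))$ (after rescaling, giving a normalized form of $L_1$). The degenerate case where some $a_j$ vanishes is handled separately and shown either to contradict the maximal-class hypothesis or to reduce to one of the three normal forms. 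Pairwise non-isomorphism can be read off from simple invariants such as $\dim H^1(\mathfrak{g})$ or the structure of the centralizer of $e_1$.

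The main obstacle will be the combinatorial step: the system of nonlinear Jacobi constraints on $\{b_j\}$ has a priori an infinite-dimensional solution space, and showing that the residual rescaling group collapses this moduli to three isolated orbits requires careful inductive bookkeeping up the grading, plus a separate argument excluding the degenerate $a_j = 0$ case.
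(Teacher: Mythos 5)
The paper itself offers no proof of this theorem: it is imported verbatim from \cite{Fial} and \cite{ShZ}, so there is no internal argument to compare yours against; the relevant benchmark is Fialowski's original classification, and your outline does follow essentially that route. Your normal forms are correct (the sequence $b_j\equiv 0$ for $\mathfrak{m}_0$, a nonzero constant for $\mathfrak{m}_2$, and $b_j=(j-2)/(j(j-1))$ for the normalization of $L_1$ in which $[e_1,e_j]=e_{j+1}$), and the iterated-difference formula $c_{ij}=\sum_{m=0}^{i-2}(-1)^m\binom{i-2}{m}b_{j+m}$ obtained from the Jacobi identity on $(e_1,e_i,e_j)$ after rescaling is also right.

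Nevertheless the proposal has a genuine gap, and it sits exactly where the theorem lives. The sentence ``a patient analysis of these equations \dots shows that the admissible sequences fall into exactly three orbits'' is a restatement of the conclusion, not an argument: the quadratic system $c_{ij}b_{i+j}=b_i\,c_{i+2,j}+b_j\,c_{i,j+2}$ coming from Jacobi on $(e_2,e_i,e_j)$, together with the nontrivial linear constraints $c_{ij}=-c_{ji}$ on the difference array (already $c_{34}=-c_{43}$ forces $b_3=b_4$), is precisely what must be solved, and you give no indication of how the induction up the grading is organized or why no further families of solutions survive the two-parameter rescaling. Two smaller but real issues: (i) the preliminary reduction is not a mere ``dimension count'' --- the lower central series of an $\mathbb{N}$-graded algebra need not align with the grading, and $\mathfrak{m}_0$ admits $\mathbb{N}$-gradings whose generators sit in degrees other than $1$ and $2$, so you must either argue up to abstract (ungraded) isomorphism or justify the regrading; (ii) the degenerate case $a_j=0$ is only deferred, yet ruling it out (via $\mathfrak{g}_{j+1}=[\mathfrak{g}_1,\mathfrak{g}_j]+[\mathfrak{g}_2,\mathfrak{g}_{j-1}]$ and the one-dimensionality of the graded pieces) is part of the proof. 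The skeleton is the correct one, but all of the load-bearing steps remain to be carried out.
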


\section{Lie algebra cohomology}
\label{cohomology}
Let $\mathfrak{g}$ be a Lie algebra over ${\mathbb K}$ and
$\rho: \mathfrak{g} \to \mathfrak{gl}(V)$ its linear representation
(or in other words $V$ is a $\mathfrak{g}$-module).
We denote by $C^q(\mathfrak{g},V)$
the space of $q$-linear skew-symmetric mappings of $\mathfrak{g}$ into
$V$. Then one can consider an algebraic complex:

$$
\begin{CD}
V @>{d_0}>>
C^1(\mathfrak{g}, V) @>{d_1}>> C^2(\mathfrak{g}, V) @>{d_2}>>
\dots @>{d_{q{-}1}}>> C^q(\mathfrak{g}, V) @>{d_q}>> \dots
\end{CD}
$$
where the differential $d_q$ is defined by:

\begin{equation}
\begin{split}
(d_q f)(X_1, \dots, X_{q{+}1})=
\sum_{i{=}1}^{q{+}1}(-1)^{i{+}1}
\rho(X_i)(f(X_1, \dots, \hat X_i, \dots, X_{q{+}1}))+\\
+ \sum_{1{\le}i{<}j{\le}q{+}1}(-1)^{i{+}j{-}1}
f([X_i,X_j],X_1, \dots, \hat X_i, \dots, \hat X_j, \dots, X_{q{+}1}).
\end{split}
\end{equation}

The cohomology of the complex $(C^*(\mathfrak{g}, V), d)$ is called
the cohomology of the Lie algebra $\mathfrak{g}$
with coefficients in the representation $\rho: \mathfrak{g} \to V$.

In this article we will consider two main examples:

1) $V= {\mathbb K}$ and $\rho: \mathfrak{g} \to {\mathbb K}$ is trivial;

2) $V= \mathfrak{g}$ and $\rho=ad: \mathfrak{g} \to \mathfrak{g}$ is
the adjoint representation of $\mathfrak{g}$.

The cohomology of $(C^*(\mathfrak{g}, {\mathbb K}), d)$ (the first example)
is called the
cohomology with trivial coefficients of the Lie algebra
$\mathfrak{g}$ and is denoted by $H^*(\mathfrak{g})$.
Also we fixe
the notation $H^*(\mathfrak{g},\mathfrak{g})$ for the cohomology
of $\mathfrak{g}$ with coefficients in the adjoint representation.

Let $\mathfrak{g}=\oplus_{\alpha}\mathfrak{g}_{\alpha}$ be a
${\mathbb Z}$-graded Lie algebra
and $V=\oplus_{\beta} V_{\beta}$ is a ${\mathbb Z}$-graded
$\mathfrak{g}$-module, i.e.,
$\mathfrak{g}_{\alpha}V_{\beta} \subset V_{\alpha {+} \beta}.$
Then the complex $(C^*(\mathfrak{g}, V), d)$
can be equipped with the ${\mathbb Z}$-grading
$C^q(\mathfrak{g},V) =
\bigoplus_{\mu} C^q_{(\mu)}(\mathfrak{g},V)$, where
a $V$-valued $q$-form $c$ belongs to
$C^q_{(\mu)}(\mathfrak{g},V)$
iff for
$X_1 \in \mathfrak{g}_{\alpha_1},
\dots,  X_q \in \mathfrak{g}_{\alpha_q}$ we have
$$c(X_1,\dots,X_q) \in
V_{\alpha_1{+}\alpha_2{+}\dots{+}\alpha_q{+}\mu}.$$

This grading is compatible with the differential  $d$ and
hence we have ${\mathbb Z}$-grading in cohomology:
$$
H^{q} (\mathfrak{g},V)= \bigoplus_{\mu \in {\mathbb Z}}
H^{q}_{(\mu)} (\mathfrak{g},V).
$$

\begin{remark}
The trivial $\mathfrak{g}$-module ${\mathbb K}$ has only one non-trivial
homogeneous component ${\mathbb K}={\mathbb K}_0$.
\end{remark}

\begin{example}
Let $\mathfrak{g}$ be an infinite dimensional Lie algebra with the
infinite basis $e_1, e_2, \dots, e_n, \dots$ and commutating
relations
$$[e_i,e_j]= c_{ij}e_{i{+}j}.$$

Let us consider the dual basis $e^1, e^2, \dots, e^n, \dots$. One
can introduce a grading (that we will call the weight) of
$\Lambda^*(\mathfrak{g}^*)=C^*(\mathfrak{g},{\mathbb K})$:
$$\Lambda^* (\mathfrak{g}^*)=
\bigoplus_{\lambda{=}1}^{\infty} \Lambda^*_{(\lambda)}
(\mathfrak{g}^*),$$ where a subspace $\Lambda^{q}_{(\lambda)}
(\mathfrak{g}^*)$ is spanned by $q$-forms $\{ e^{i_1} {\wedge}
\dots {\wedge} e^{i_q}, \; i_1{+}\dots{+}i_q {=} \lambda \}$. For
instance a monomial $e^{i_1} \wedge \dots \wedge e^{i_q}$ has the
degree $q$ and the weight $\lambda=i_1{+}\dots{+}i_q$.

The complex $(C^*(\mathfrak{g}, \mathfrak{g}), d)$ is
${\mathbb Z}$-graded:
$$C^*(\mathfrak{g}, \mathfrak{g})=
\bigoplus_{\mu \in {\mathbb Z}}C^*_{(\mu)}(\mathfrak{g}, \mathfrak{g}),$$
where $C^q_{(\mu)}(\mathfrak{g}, \mathfrak{g})$ is spanned
by monomials
$\{ e_l \otimes e^{i_1} {\wedge} \dots {\wedge} e^{i_q}, \;
i_1{+}\dots{+}i_q{+}\mu =l \}$.
\end{example}

Let $\mathfrak{g}= \oplus_{\alpha >0} \mathfrak{g}_{\alpha}$ be a
${\mathbb N}$-graded Lie algebra. One can define a decreasing
filtration ${\mathcal F}$ of the adjoint cochain complex
$(C^*(\mathfrak{g},\mathfrak{g}),d)$ of ${\mathfrak{g}}$:
$$
{\mathcal F}^0 C^*(\mathfrak{g},\mathfrak{g})\supset \dots \supset
{\mathcal F}^q C^*(\mathfrak{g},\mathfrak{g}) \supset {\mathcal
F}^{q{+}1} C^*(\mathfrak{g},\mathfrak{g}) \supset \dots
$$
where the subspace ${\mathcal F}^q
C^{p{+}q}(\mathfrak{g},\mathfrak{g})$ is spanned by $p{+}q$-forms
$c$ in $C^{p{+}q}(\mathfrak{g},\mathfrak{g})$ such that
$$
c(X_1,\dots,X_{p{+}q}) \in \bigoplus_{\alpha=q}^{+\infty}
{\mathfrak g}_{\alpha}, \; \forall X_1,\dots,X_{p{+}q} \in
\mathfrak{g}.
$$

The filtration ${\mathcal F}$ is compatible with $d$.

Let us consider the corresponding spectral sequence $E_r^{p,q}$:
\begin{proposition}
\label{spectralseq} $E_1^{p,q}={\mathfrak g}_q \otimes
H^{p{+}q}({\mathfrak g})$.
\end{proposition}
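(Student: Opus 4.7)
The plan is to identify the $E_0$-page concretely and then compute its cohomology by recognizing it as a trivial-coefficient Chevalley--Eilenberg complex tensored with a finite-dimensional space. First I would describe $E_0^{p,q}={\mathcal F}^q C^{p+q}(\mathfrak{g},\mathfrak{g})/{\mathcal F}^{q+1}C^{p+q}(\mathfrak{g},\mathfrak{g})$ as the space of skew-symmetric $(p{+}q)$-forms on $\mathfrak{g}$ with values in the single graded component $\mathfrak{g}_q$ (the quotient kills precisely the part of the value lying in $\bigoplus_{\alpha\ge q+1}\mathfrak{g}_{\alpha}$). Since $\mathfrak{g}_q$ is finite-dimensional, this gives a canonical identification
\[
E_0^{p,q}\;\cong\;\mathfrak{g}_q\otimes C^{p+q}(\mathfrak{g},{\mathbb K}).
\]

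Next I would analyze the induced differential $d_0$. Looking at the Chevalley--Eilenberg formula $(1)$ applied to a cochain $f\in{\mathcal F}^q$, the second (purely combinatorial) sum
\[
\sum_{i<j}(-1)^{i+j-1}f([X_i,X_j],X_1,\dots,\hat X_i,\dots,\hat X_j,\dots,X_{p+q+1})
\]
leaves the values inside $\bigoplus_{\alpha\ge q}\mathfrak{g}_{\alpha}$, whereas the first sum $\sum_i(-1)^{i+1}[X_i,f(\dots)]$ lands in $\bigoplus_{\alpha\ge q+1}\mathfrak{g}_{\alpha}$ because $X_i\in\mathfrak{g}$ is homogeneous of strictly positive degree (here the assumption $\mathfrak{g}=\oplus_{\alpha>0}\mathfrak{g}_\alpha$ is essential). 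Hence the adjoint action term is pushed into ${\mathcal F}^{q+1}$ and vanishes in $E_0$, while the remaining term is precisely the trivial-coefficient differential applied to the $C^{p+q}(\mathfrak{g},{\mathbb K})$-factor. In other words, under the identification above,
\[
d_0\;=\;\mathrm{id}_{\mathfrak{g}_q}\otimes d_{\mathrm{triv}}.
\]

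Finally I would take cohomology: since tensoring with the finite-dimensional vector space $\mathfrak{g}_q$ is an exact functor,
\[
E_1^{p,q}\;=\;H\bigl(E_0^{p,q},d_0\bigr)\;=\;\mathfrak{g}_q\otimes H^{p+q}(\mathfrak{g},{\mathbb K})\;=\;\mathfrak{g}_q\otimes H^{p+q}(\mathfrak{g}),
\]
which is the claim. The only real point requiring attention is the verification that the adjoint-action term is killed modulo ${\mathcal F}^{q+1}$; this is a direct consequence of the positivity of the grading on $\mathfrak{g}$ and is the step on which the entire identification rests. The bigrading bookkeeping ($p$ indexes the form degree relative to the filtration, $q$ indexes the graded piece of the values) and the fact that the total degree $p+q$ is preserved by $d_0$ must be checked alongside, but both are routine.
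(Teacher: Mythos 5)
Your proof is correct and follows essentially the same route as the paper: identify $E_0^{p,q}$ with $\mathfrak{g}_q\otimes\Lambda^{p+q}(\mathfrak{g}^*)$ and observe that $d_0$ acts as $\mathrm{id}\otimes d_{\mathrm{triv}}$. The only difference is that you spell out the one step the paper leaves implicit, namely that the adjoint-action term of the Chevalley--Eilenberg differential raises the filtration degree (by positivity of the grading) and therefore dies in the associated graded complex.
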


We have the following natural isomorphisms:
\begin{equation}
\begin{split}
C^{p{+}q}({\mathfrak g},{\mathfrak g})
= {\mathfrak g} \otimes \Lambda^{p{+}q}({\mathfrak g}^*)\\
E_0^{p,q}={\mathcal F}^q C^{p{+}q}(\mathfrak{g},\mathfrak{g})/
{\mathcal F}^{q{+}1} C^{p{+}q}(\mathfrak{g},\mathfrak{g})
={\mathfrak g}_q \otimes \Lambda^{p{+}q}({\mathfrak g}^*).
\end{split}
\end{equation}

Now the proof follows from the formula for the differential
$d_0^{p,q}: E_0^{p,q} \to E_0^{p{+}1,q}$:
$$d_0(X \otimes f)=X\otimes df,$$
where $X \in \mathfrak{g}, f \in \Lambda^{p{+}q}({\mathfrak g}^*)$
and $df$ is the standart differential of the cochain complex of
$\mathfrak{g}$ with trivial coefficients.
\begin{remark}
The spectral sequence considered above was used by Feigin and
Fukhs \cite{FeFu} in their computations of $H^*(L_1,L_1)$.
\end{remark}

\section{Scalar cohomology of $\mathfrak{m}_0$}
The cohomology algebra $H^*(\mathfrak{m}_0)$ was calculated in
\cite{FialMill}. We will briefly recall some results from this
article.

It were introduced two operators:

1) $D_1=ad^*e_1: \Lambda^*(e_2, e_3, \dots) \to \Lambda^*(e_2,
e_3, \dots)$,
\begin{equation}
\begin{split}
D_1(e^2)=0, \; D_1(e^i)= e^{i-1}, \; \forall i\ge 3,\\
D_1(\xi\wedge \eta)=D_1(\xi)\wedge \eta +\xi\wedge D_1(\eta), \;
\: \forall \xi, \eta \in \Lambda^*(e_2, e_3, \dots).
\end{split}
\end{equation}

2) and its right inverse $D_{-1}: \Lambda^*(e^2,e^3,\dots) \to
\Lambda^*(e^2,e^3,\dots)$,
\begin{equation}
\begin{split}
\label{D_{-1}}
e^i=e^{i+1}, \; D_{-1}(\xi{\wedge} e^i)= \sum_{l\ge
0}(-1)^l D_{1}^{l}(\xi){\wedge} e^{i+1+l},
\end{split}
\end{equation}
where $i\ge 2$ and $\xi$ is an arbitrary form in
$\Lambda^*(e^2,\dots,e^{i-1})$. The sum in the definition
(\ref{D_{-1}}) of $D_{-1}$ is always finite because $D_1^l$
decreases the second grading by $l$. For instance,
$$D_{-1}(e^i\wedge e^k)=\sum_{l=0}^{i-2} ({-}1)^l e^{i-l}{\wedge} e^{k+l+1}.$$

\begin{proposition}
The operators $D_1$ and $D_{-1}$ have the following properties:
$$
\label{D_1,D_{-1}} d\xi=e^1\wedge D_1\xi, \; e^1\wedge \xi=d
D_{-1}\xi, \; D_1D_{-1}\xi=\xi, \quad \xi \in
\Lambda^*(e^2,e^3,\dots).
$$
\end{proposition}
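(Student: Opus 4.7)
The plan is to establish the three identities in order: first (1), then (3), and finally obtain (2) as a one-line consequence of the other two.

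For (1), I would first check the formula on single generators $e^i$ with $i\ge 2$. Using the Chevalley--Eilenberg formula from Section~\ref{cohomology} and the structure relations $[e_1,e_i]=e_{i+1}$ of $\mathfrak{m}_0$, we get $de^2=0$ and $de^k=e^1\wedge e^{k-1}$ for $k\ge 3$, which matches $e^1\wedge D_1 e^i$ since $D_1 e^2=0$ and $D_1 e^k=e^{k-1}$. To extend to arbitrary $\xi\in \Lambda^*(e^2,e^3,\dots)$, I induct on the wedge length. Assuming the identity for $\xi$ and $\eta$, we have
$$d(\xi\wedge\eta) = d\xi\wedge\eta + (-1)^{|\xi|}\xi\wedge d\eta = e^1\wedge D_1\xi\wedge\eta + (-1)^{|\xi|}\xi\wedge e^1\wedge D_1\eta,$$
and since $e^1$ is a $1$-form, $(-1)^{|\xi|}\xi\wedge e^1 = e^1\wedge\xi$, so the right-hand side equals $e^1\wedge(D_1\xi\wedge\eta + \xi\wedge D_1\eta) = e^1\wedge D_1(\xi\wedge\eta)$ by the derivation property of $D_1$.

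For (3), I would induct on the wedge length of $\xi$. In the base case $\xi=e^i$, the definition with empty prefix gives $D_{-1}(e^i)=e^{i+1}$ (the $l=0$ term), whence $D_1D_{-1}(e^i)=e^i$. For the inductive step, write $\xi=\eta\wedge e^i$ with $e^i$ the factor of maximal index (unique in a nonzero monomial, so $\eta\in\Lambda^*(e^2,\dots,e^{i-1})$). Then, applying $D_1$ as a derivation and using $D_1 e^{i+1+l}=e^{i+l}$,
$$D_1D_{-1}(\xi) = \sum_{l\ge 0}(-1)^l\bigl[D_1^{l+1}(\eta)\wedge e^{i+1+l} + D_1^l(\eta)\wedge e^{i+l}\bigr].$$
Reindexing $m=l+1$ in the first sum makes the two series telescope, leaving only the $l=0$ contribution $\eta\wedge e^i=\xi$.

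Finally, (2) follows immediately: for $\xi\in\Lambda^*(e^2,e^3,\dots)$, also $D_{-1}\xi\in\Lambda^*(e^2,e^3,\dots)$, so by (1) and (3),
$$dD_{-1}\xi = e^1\wedge D_1D_{-1}\xi = e^1\wedge\xi.$$
The whole proposition is a routine verification; I do not anticipate a serious obstacle. The only care needed is in the inductive step for (3), where one must pick the factor of maximal index as the distinguished last factor to apply the recursive definition of $D_{-1}$, and check that the combinatorial cancellation among the two sums is exact.
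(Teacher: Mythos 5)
Your proof is correct. The paper states this proposition without proof --- it is simply recalled from the computation of $H^*(\mathfrak{m}_0)$ in \cite{FialMill} --- so there is no argument of the author's to compare against; your direct verification is the natural one and all three steps check out (the base case $de^k=e^1\wedge e^{k-1}$, $k\ge 3$, $de^2=0$ agrees with the sign convention of the differential given in Section 2, and the passage from the antiderivation property of $d$ to the plain derivation property of $D_1$ via $(-1)^{|\xi|}\xi\wedge e^1=e^1\wedge\xi$ is handled correctly). Two small remarks. In (3) you announce an induction but never use the inductive hypothesis: once a nonzero monomial is written as $\eta\wedge e^i$ with $e^i$ the unique factor of maximal index, the telescoping of the two sums already yields $D_1D_{-1}(\eta\wedge e^i)=\eta\wedge e^i$, so the argument is a direct computation on monomials plus linearity. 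Also, all three identities implicitly assume $\deg\xi\ge 1$: for a scalar $\xi$ the operator $D_{-1}$ is not defined and $e^1$ is not exact, so $D_1D_{-1}\xi=\xi$ and $dD_{-1}\xi=e^1\wedge\xi$ could not hold; this matches the way the operators are actually used in the paper but is worth stating when you fix the scope of $\xi$.
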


\begin{theorem}[\cite{FialMill}]
The infinite dimensional bigraded cohomology
$H^*(\mathfrak{m}_0)=\oplus_{k,q} H^q_k(\mathfrak{m}_0)$ is
spanned by the cohomology classes of $e^1$, $e^2$ and of the
following homogeneous cocycles:
\begin{equation}
\label{cocycles} \omega_I=\omega(e^I)=\omega(e^{i_1}{\wedge}\dots
{\wedge} e^{i_q} {\wedge} e^{i_q{+}1})= \sum\limits_{l\ge 0}(-1)^l
D_1^l(e^{i_1}\wedge \dots \wedge e^{i_q})\wedge e^{i_q+1+l},
\end{equation}
where $q \ge 1, \; I=(i_1,\dots,i_{q-1},i_q,i_q+1), 2\le i_1
{<}i_2{<}{\dots} {<}i_q$. The multiplicative structure is defined
by
\begin{multline}
\label{multiplicat} [e^1] {\wedge}
\omega(\xi{\wedge}e^i{\wedge}e^{i+1})=0,\; [e^2] {\wedge}
\omega(\xi{\wedge}e^i{\wedge}e^{i+1})
=\omega(e^2{\wedge}\xi{\wedge}e^i{\wedge}e^{i+1}),\\
\omega(\xi{\wedge}e^i{\wedge}e^{i+1}) {\wedge}
\omega(\eta{\wedge}e^j{\wedge}e^{j+1})
=\sum_{l=0}^{j-i-2}(-1)^l\omega(D_1^l(\xi{\wedge}e^i){\wedge}e^{i+1+l}
{\wedge}\eta{\wedge}e^j{\wedge}e^{j+1})+\\
+(-1)^{i{-}j{+}deg\eta}\sum_{s \ge 1}
\omega((ad^*e_1)^{i{-}j{-}1{+}s}(\xi{\wedge}e^i){\wedge}
D_1^s(\eta{\wedge}e^j){\wedge}e^{j{+}s} {\wedge}e^{j{+}s{+}1}),
\end{multline}
where $i < j$, $\xi$ and $\eta$ are arbitrary homogeneous forms in
$\Lambda^*(e^2,\dots,e^{i{-}1})$ and
$\Lambda^*(e^2,\dots,e^{j{-}1})$, respectively.
\end{theorem}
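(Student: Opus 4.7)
The plan is to exploit the very restricted form of the Chevalley-Eilenberg differential on $\Lambda^*(\mathfrak{m}_0^*)$: since the only nontrivial brackets involve $e_1$, we have $de^1 = de^2 = 0$ and $de^i = e^1 \wedge e^{i-1}$ for $i \geq 3$. Writing any form uniquely as $\omega = \xi + e^1 \wedge \eta$ with $\xi, \eta \in \Lambda^*(e^2, e^3, \ldots)$, the formula $d\zeta = e^1\wedge D_1\zeta$ together with $e^1\wedge e^1 = 0$ gives $d\omega = e^1 \wedge D_1\xi$. Thus $\omega$ is a cocycle iff $D_1\xi = 0$ (with $\eta$ unconstrained), while every coboundary has the form $e^1\wedge D_1\alpha$.

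Using the homotopy identity $e^1\wedge\zeta = dD_{-1}\zeta$, valid for $\zeta\in\Lambda^{\geq 1}(e^2,e^3,\ldots)$, every cocycle is cohomologous to its $\xi$-part. A nonzero $\xi\in\ker D_1\cap\Lambda^*(e^2,e^3,\ldots)$ cannot be a coboundary, since coboundaries necessarily carry $e^1$. The only class that escapes this reduction is $[e^1]\in H^1$, because $D_{-1}$ is not defined on $\Lambda^0$. So the theorem reduces to producing a basis of $\ker D_1$ inside $\Lambda^{\geq 1}(e^2, e^3, \ldots)$, and I claim that $\{e^2\}\cup\{\omega_I\}$ is such a basis.

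Verifying $D_1\omega_I = 0$ is direct: $D_1$ is an even derivation with $D_1(e^{i_q+1+l}) = e^{i_q+l}$, so expanding yields two telescoping sums whose terms cancel in pairs, with the $l=0$ boundary contribution $e^{i_1}\wedge\ldots\wedge e^{i_q}\wedge e^{i_q}$ killed by antisymmetry. Linear independence follows by ordering monomials by their last index: the $l=0$ term $e^{i_1}\wedge\ldots\wedge e^{i_q}\wedge e^{i_q+1}$ of $\omega_I$ is the unique monomial therein with minimal last index, so distinct $I$ yield distinct leading monomials. For spanning I would use the dimension identity $\dim\ker D_1|_{\Lambda^q_\lambda} = \dim\Lambda^q_\lambda - \dim\Lambda^q_{\lambda-1}$, which follows because $D_1D_{-1} = \mathrm{id}$ on $\Lambda^{\geq 1}(e^2,e^3,\ldots)$ forces $D_1\colon\Lambda^q_\lambda \to \Lambda^q_{\lambda-1}$ to be surjective, and then verify combinatorially that the number of admissible tuples $I$ giving $\omega_I\in\Lambda^q_\lambda$ matches this difference in every bidegree. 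This weight/degree bookkeeping is where I expect the main technical work to lie.

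For the multiplicative structure, $[e^1]\wedge[\omega_I] = 0$ is immediate from $e^1\wedge\omega_I = dD_{-1}\omega_I$, and $[e^2]\wedge[\omega(\xi\wedge e^i\wedge e^{i+1})] = [\omega(e^2\wedge\xi\wedge e^i\wedge e^{i+1})]$ follows because $D_1(e^2) = 0$ lets $e^2$ pass through the defining sum. For a product $\omega(\xi\wedge e^i\wedge e^{i+1})\wedge\omega(\eta\wedge e^j\wedge e^{j+1})$ with $i<j$, I would substitute the definitions, repeatedly apply the Leibniz rule to move $D_1^l$ across the second factor, and split the resulting sum according to whether the auxiliary index $i+1+l$ remains strictly below $j$ or overtakes it; the two cases produce the two sums in formula (\ref{multiplicat}).
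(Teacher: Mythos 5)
The paper itself gives no proof of this theorem (it is quoted from \cite{FialMill}), but your argument is precisely the mechanism that reference and Section~3 here are built on: the reduction of $H^{\ge 2}(\mathfrak{m}_0)$ to $\ker D_1 \cap \Lambda^*(e^2,e^3,\dots)$ via $d\omega=e^1\wedge D_1\xi$ and $e^1\wedge\zeta=dD_{-1}\zeta$, the telescoping check that $D_1\omega_I=0$, independence via the unique monomial $e^{i_1}\wedge\dots\wedge e^{i_q}\wedge e^{i_q+1}$ with adjacent top indices, and spanning via surjectivity of $D_1$ together with the count $P_q(k)-P_q(k-1)$. The two steps you defer are indeed the routine ones: the number of admissible $I$ in a fixed bidegree equals $\dim\Lambda^{q+1}_{(\lambda)}-\dim\Lambda^{q+1}_{(\lambda-1)}$ by the bijection that decreases the largest entry of any strictly increasing tuple whose top two entries are non-adjacent, and for the product formula it is easier to note that $\omega_I\wedge\omega_J$ again lies in $\ker D_1$, so its coefficients in the basis $\{\omega_K\}$ can be read off from its monomials with adjacent top indices rather than by reorganizing the full double sum directly.
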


Formula (\ref{cocycles}) determines a homogeneous closed
$(q{+}1)$-form of the second grading
$i_1{+}{\dots}{+}i_{q{-}1}{+}2i_{q}{+}1$. It has only one monomial
in its decomposition of the form $\xi \wedge e^i \wedge e^{i+1}$
and it is $e^{i_1}{\wedge}\dots {\wedge} e^{i_q} {\wedge}
e^{i_q{+}1}$.

The whole number of linearly indepenedent $q$-cocycles of the
second grading $k{+}\frac{q(q{+}1)}{2}$ is equal to
$${\rm dim} H_{k{+}\frac{q(q{+}1)}{2}}^q(\mathfrak{m}_0)=
P_q(k)-P_q(k{-}1),$$ where $P_q(k)$ denotes the number of
(unordered) partitions of a positive integer $k$ into $q$ parts.

\begin{example}
\label{3cocycle}
\begin{multline*}
\omega_{(5,6,7)}=\omega(e^5{\wedge} e^6 {\wedge} e^7)= e^5{\wedge}
e^6 {\wedge}e^7-e^4{\wedge} e^6 {\wedge} e^8+(e^3{\wedge} e^6+
e^4 {\wedge} e^5){\wedge} e^9-\\
-(e^2{\wedge}e^6+2e^3 {\wedge}e^5){\wedge}e^{10} +(3e^2{\wedge}
e^5+2e^3{\wedge} e^4){\wedge} e^{11}- 5e^2{\wedge} e^4{\wedge}
e^{12}+5e^2{\wedge} e^3{\wedge} e^{13}.
\end{multline*}
\end{example}

\begin{proposition}
\label{umnozh} It follows from (\ref{multiplicat}) that
$$
\omega(e^2\wedge e^3\wedge\dots\wedge e^i\wedge
e^{i+1})\wedge\omega_I=\omega(e^2\wedge e^3\wedge\dots\wedge
e^i\wedge e^{i+1}\wedge e^I).
$$
\end{proposition}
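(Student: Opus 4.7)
My plan is to prove Proposition \ref{umnozh} by expanding both sides directly via the cocycle formula (\ref{cocycles}) and reducing the identity to a single observation about the action of $D_1$, which turns out to be much cleaner than invoking the full multiplication formula (\ref{multiplicat}).

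The entire argument pivots on the following key observation: the form $\alpha := e^2\wedge e^3\wedge\dots\wedge e^{i+1}$ satisfies $D_1^m(\alpha) = 0$ for every $m \geq 1$. Indeed, $D_1$ is a derivation, $D_1(e^2) = 0$, and for each $k \in \{3,\dots,i+1\}$ the image $D_1(e^k) = e^{k-1}$ duplicates a factor already present in $\alpha$, so every Leibniz summand vanishes and $D_1(\alpha) = 0$. The same reasoning applied to $e^2\wedge\dots\wedge e^i$ shows that formula (\ref{cocycles}) evaluated at $\alpha$ itself (taking $\xi = e^2\wedge\dots\wedge e^{i-1}$) collapses to its $l = 0$ term, giving $\omega(\alpha) = \alpha$.

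For the right-hand side I would apply (\ref{cocycles}) to $\alpha\wedge e^I$, reading it as $\tilde\xi\wedge e^{i_q}\wedge e^{i_q+1}$ with $\tilde\xi = \alpha\wedge e^{i_1}\wedge\dots\wedge e^{i_{q-1}}$, to get
\begin{equation*}
\omega(\alpha\wedge e^I) = \sum_{l\geq 0} (-1)^l\, D_1^l\bigl(\alpha\wedge e^{i_1}\wedge\dots\wedge e^{i_q}\bigr)\wedge e^{i_q+1+l}.
\end{equation*}
Setting $B = e^{i_1}\wedge\dots\wedge e^{i_q}$ and applying the iterated Leibniz rule $D_1^l(\alpha\wedge B) = \sum_{k}\binom{l}{k} D_1^k(\alpha)\wedge D_1^{l-k}(B)$, the key observation kills every summand with $k \geq 1$, leaving only $D_1^l(\alpha\wedge B) = \alpha\wedge D_1^l(B)$. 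Factoring $\alpha$ out of the sum,
\begin{equation*}
\omega(\alpha\wedge e^I) = \alpha\wedge\sum_{l\geq 0}(-1)^l D_1^l(B)\wedge e^{i_q+1+l} = \alpha\wedge\omega_I = \omega(\alpha)\wedge\omega_I,
\end{equation*}
which is precisely the claimed identity.

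The main conceptual obstacle is nothing beyond recognizing the key observation; once that is in hand, the proof is pure bookkeeping with the Leibniz rule and the defining sum. As the author indicates, one could alternatively specialize the full multiplication formula (\ref{multiplicat}) to $\xi = e^2\wedge\dots\wedge e^{i-1}$; the identity $D_1^{\geq 1}(\xi\wedge e^i) = 0$ then annihilates every contribution of both sums except the $l = 0$ term of the first sum, which again gives $\omega(\alpha\wedge e^I)$.
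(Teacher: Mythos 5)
Your argument is correct, and it is a genuinely different (and more self-contained) route than the one the paper intends: the paper simply asserts that the identity ``follows from (\ref{multiplicat})'', i.e.\ from specializing the multiplication formula on cohomology, whereas you bypass (\ref{multiplicat}) entirely and verify the identity at the level of representative cocycles directly from the defining sum (\ref{cocycles}). The two pillars of your computation --- that $D_1^m(e^2\wedge\cdots\wedge e^{i+1})=0$ for $m\ge 1$ because $D_1(e^k)=e^{k-1}$ duplicates a factor already present, hence $\omega(e^2\wedge\cdots\wedge e^{i+1})=e^2\wedge\cdots\wedge e^{i+1}$, and that the Leibniz rule then gives $D_1^l(\alpha\wedge B)=\alpha\wedge D_1^l(B)$ --- are both sound, and factoring $\alpha$ out of the sum yields exactly $\omega(\alpha\wedge e^I)$ (implicitly in the nondegenerate case $i+1<i_1$, which is the only case where the right-hand side is a basic cocycle). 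What your approach buys is a stronger statement: an equality of cochains, not merely of cohomology classes, obtained without the bookkeeping of the two sums in (\ref{multiplicat}). One caveat on your closing aside: in the second sum of (\ref{multiplicat}) the exponent $i-j-1+s$ runs through nonpositive values for $1\le s\le j-i+1$, so the vanishing of $D_1^{m}(\xi\wedge e^i)$ for $m\ge 1$ does not by itself kill the term with exponent $0$ (namely $s=j-i+1$); that term requires a separate check, e.g.\ that $(\xi\wedge e^i)\wedge D_1^{j-i+1}(\eta\wedge e^j)$ vanishes. Since that remark is only an alternative you sketch and your main proof does not depend on it, the proof stands.
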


\section{The spectral sequence and $H^*({\mathfrak{m}_0}, {\mathfrak{m}_0}$)}
\label{Computations} In this section we compute
$H^*({\mathfrak{m}_0}, {\mathfrak{m}_0})$.

\begin{theorem}
\label{main} The bigraded cohomology
$H^*(\mathfrak{m}_0,\mathfrak{m}_0)=\oplus_{p,k}
H^p_k(\mathfrak{m}_0,\mathfrak{m}_0)$ is an infinite dimensional
linear space of formal series $\sum_{J,s} \alpha_{J,s}\Psi_{J,s}$
of the following infinite system $\{\Psi_{J,s}\}$ of homogeneous
cocycles:
\begin{equation}
\label{formulirovka}
\begin{split}
\Psi_{1,1}=e_1\otimes e^1+\sum_{j=3}^{\infty}(j{-}2)e_j\otimes
e^j,\; \Psi_{1,2}=e_2\otimes e^1, \;
\Psi_{2,l+2}=\sum_{j=2}^{\infty}e_{l{+}j}\otimes e^j, \; l \ge 0,
l\ne 1,
\\
\Psi_{I,r}=\sum_{j=0}^{\infty}e_{r{+}j} \otimes D_{-1}^j\omega_I,
\;r {\ge} 2, \; I{=}(i_1,\dots,i_q,i_q+1),\quad q{\ge}1, \\
2\le i_1{<}{\dots} {<}i_q,\;
i_{r-2}>r-1,\; {\rm if} \quad 3 \le r \le q+1, r= q+3.\\
\end{split}
\end{equation}
where $\omega_I$ stands for a basic scalar cocycle defined by
(\ref{cocycles}) and $D_{-1}$ is the operator defined by
(\ref{D_{-1}}).

The homogeneous cocycles $\Psi_{J,s}$ have the following gradings:
\begin{equation}
\label{main_formula}
\begin{split}
\Psi_{1,1}, \Psi_{2,2} \in H^1_0(\mathfrak{m}_0,\mathfrak{m}_0),
\; \Psi_{2,1} \in H^1_1(\mathfrak{m}_0,\mathfrak{m}_0), \;
\Psi_{2,l+2} \in H^1_l(\mathfrak{m}_0,\mathfrak{m}_0),
\\ \Psi_{I,r} \in H^{q+1}_k(\mathfrak{m}_0,\mathfrak{m}_0), \; I{=}(i_1,\dots,i_q,i_q+1), \;k
=r-\left(i_1{+}i_2{+}\dots{+}i_{q-1}{+}2i_q{+}1\right).
\end{split}
\end{equation}
The cocycle $\Psi_{I,r}$ with $I{=}(i_1,i_2,\dots,i_q,i_q+1)$ is
uniquely determined by the following condition:
\begin{equation}
\label{property_Psi}
\begin{split}
\Psi_{I,r}\left(e_{i_1},e_{i_2},\dots,e_{i_q},e_{i_q+1}\right)=e_r,
\quad 2\le i_1{<}{\dots} {<}i_q,\\
\Psi_{I,r}\left(e_{j_1},e_{j_2},\dots,e_{j_q},e_{j_q+1}\right)=0,
\quad 2\le j_1{<}{\dots} {<}j_q, \; (j_1,\dots,j_q,j_{q}+1) \ne I.
\end{split}
\end{equation}

\end{theorem}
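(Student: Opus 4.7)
The plan is to apply the spectral sequence $E_r^{p,q}$ of Proposition~\ref{spectralseq}, whose first term is $E_1^{p,q}=\mathfrak{m}_0\otimes H^{p+q}(\mathfrak{m}_0)$. Combined with the explicit basis $\{1,[e^1],[e^2]\}\cup\{[\omega_I]\}$ of $H^*(\mathfrak{m}_0)$ recalled in Section~3, this gives a bigraded basis of $E_1$ indexed by pairs $(r,I)$. The proof has three stages: (i) lift each such $E_1$-generator to an explicit cocycle in $C^*(\mathfrak{m}_0,\mathfrak{m}_0)$; (ii) decide which lifts survive as independent $E_\infty$-classes, thereby pinning down the admissibility conditions on the indices; and (iii) verify that every cohomology class is uniquely a formal series in these cocycles.

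For stage (i), the key observation is that on $\mathfrak{m}_0\otimes\Lambda^*(\mathfrak{m}_0^*)$ the adjoint differential takes the simple form
\begin{equation*}
d(e_\ell\otimes\phi)=e_{\ell+1}\otimes(e^1\wedge\phi)+e_\ell\otimes d\phi,\qquad \ell\ge 2,
\end{equation*}
because $\mathrm{ad}(e_1)$ is the only bracket acting nontrivially on $e_\ell$ when $\ell\geq 2$. Combining this with the three identities $d\xi=\pm e^1\wedge D_1\xi$, $e^1\wedge\xi=\mp d(D_{-1}\xi)$, and $D_1D_{-1}=\mathrm{id}$ recalled in Section~3, a telescoping argument shows $d\Psi_{I,r}=0$ for $r\geq 2$: the tail $e_{r+j+1}\otimes(e^1\wedge D_{-1}^j\omega_I)$ produced at level $j$ by the first summand of $d$ exactly cancels $e_{r+j+1}\otimes d(D_{-1}^{j+1}\omega_I)$ coming from level $j+1$ of the series. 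The exceptional degree-one cocycles $\Psi_{1,1}$, $\Psi_{1,2}$, $\Psi_{2,l+2}$ involve summands $e_1\otimes(\cdot)$ for which $d$ picks up additional contributions from $[e_j,e_1]=-e_{j+1}$ ($j\geq 2$); in each case their explicit coefficients are chosen by direct inspection so these extra contributions cancel.

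For stages (ii) and (iii), the normalization~(\ref{property_Psi}) already forces uniqueness of each $\Psi_{I,r}$: since $D_{-1}$ strictly raises the indices of the basis monomials appearing in $\omega_I$, the only increasing tuple $(j_1,\ldots,j_q,j_q+1)$ on which $\Psi_{I,r}$ is nonvanishing is $I$ itself, and only the leading summand $e_r\otimes\omega_I$ contributes, giving the value $e_r$. The admissibility conditions on $(I,r)$ arise by examining the higher differentials $d_s$: one must exclude pairs for which $e_r\otimes\omega_I$ lies in the image of some $d_s$ applied to a lower-filtration element. Using the multiplicative structure of $H^*(\mathfrak{m}_0)$ from~(\ref{multiplicat}) and Proposition~\ref{umnozh}, one identifies the surviving representatives as exactly those with $i_{r-2}>r-1$ in the stated range of $r$; the excluded lifts are checked to be genuine spectral coboundaries. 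Completeness — that every class is uniquely $\sum\alpha_{J,s}\Psi_{J,s}$ — then follows by a dimension count in each fixed bidegree using the formula $\dim H^q_{k+q(q+1)/2}(\mathfrak{m}_0)=P_q(k)-P_q(k-1)$ of Section~3 together with the linear independence supplied by~(\ref{property_Psi}).

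The principal obstacle I expect is the spectral-sequence bookkeeping in stage (ii): matching each higher differential with an appropriate coboundary relation coming from products of scalar cocycles controlled by~(\ref{multiplicat}) and Proposition~\ref{umnozh}, and verifying that outside the admissible range the relation $d_s(\text{lower-filtration chain})=e_r\otimes\omega_I$ does hold. Once the admissibility condition is derived, the vanishing of the higher differentials on admissible $(I,r)$ is automatic because $\Psi_{I,r}$ has already been constructed as a closed cocycle in the full complex, making precise the introduction's claim that the higher differentials are ``almost'' trivial.
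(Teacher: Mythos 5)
Your proposal follows essentially the same route as the paper: the filtration spectral sequence of Proposition~\ref{spectralseq} with $E_1^{p,q}=(\mathfrak{m}_0)_q\otimes H^{p+q}(\mathfrak{m}_0)$, explicit closed lifts $\Psi_{I,r}=\sum_j e_{r+j}\otimes D_{-1}^j\omega_I$ built from the telescoping identities $d\xi=e^1\wedge D_1\xi$, $e^1\wedge\xi=dD_{-1}\xi$, $D_1D_{-1}=\mathrm{id}$, and the admissibility conditions on $(I,r)$ read off from the higher differentials $d_s$ killing the classes $e_1\otimes\omega$ and their images $e_{s+1}\otimes e^2\wedge\cdots\wedge e^s\wedge\tilde\omega$. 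The only cosmetic difference is your closing dimension count for completeness, which the paper replaces by the observation that the surviving $E_\infty$-classes are exactly represented by the listed cocycles.
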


\begin{proof}
We consider the spectral sequence $E_r^{p,q}$ from the Proposition
\ref{spectralseq}. Namely it follows  that
$$E_1^{p,q}=({\mathfrak{m}_0})_q \otimes H^{p{+}q}({\mathfrak{m}_0}).$$
\begin{proposition}
1) The first differential $d_1:E_1^{p,q}\to E_1^{p,q+1}$ is non
trivial  only in the cases:
$$
d_1(e_q)=-e_{q+1}\otimes e^1, \; q \ge 2.
$$

2) For the second differential $d_2:E_2^{p,q}\to E_2^{p-1,q+2}$ we
have the following property:
$$
d_2(e_1)=e_3 \otimes e^2, \quad d_2(e_1 \otimes \omega)=e_3\otimes
e^2 {\wedge}\omega, \quad d_2(e_q \otimes \omega)=0,  \; q \ge 2.
$$
\end{proposition}
\begin{proof}
First of all by the definiton of
$d:C^0(\mathfrak{g},\mathfrak{g}){=}\mathfrak{g} \to
C^1(\mathfrak{g},\mathfrak{g}){=} \mathfrak{g} {\otimes}
{\mathfrak g}^*$ we have
$$dX(Y)=[X,Y], \; X,Y \in \mathfrak{g}.$$
Hence for an arbitrary closed $\omega$ we have
\begin{equation}
\begin{split}
d e_1 = \sum_{j=2}^{\infty} e_{j+1} \otimes e^j, \quad d e_j = -
e_{j{+}1} \otimes e^1, \; j > 1;\\
d (e_1 \otimes \omega)=e_3\otimes e^2 {\wedge}\omega +\dots, \quad
d(e_q \otimes \omega)=e_{q+1}\otimes e^1 \wedge \omega , q
>1.\end{split}
\end{equation}

We recall that the product $e^1 \wedge \omega$ is always trivial
in cohomology $H^*(\mathfrak{m}_0,\mathfrak{m}_0)$ and we can
shift our form $e_q \otimes \omega$ by $e_{q+1} \otimes
D_{-1}\omega$ and see that
$$
d(e_q \otimes \omega + e_{q+1} \otimes D_{-1}\omega)=-e_{q+2}
\otimes e^1 \wedge D_{-1}\omega+\dots
$$
We put everywhere dots instead of terms of higher filtration.
\end{proof}

It follows from the Proposition \ref{umnozh} that the class
$e^2\wedge \omega$ is trivial if and only if $\omega = e^2 \wedge
\tilde \omega$, for some $\tilde \omega$. We came to the following
corollary:

\begin{corollary}
\label{maincorl} The following classes in $E_1$ survive to $E_3$:
\begin{equation}
\begin{split}
e_1 \otimes e^1, \quad e_1 \otimes {e^2 {\wedge} \omega},\quad e_2 \otimes e^1, \quad e_q \otimes e^2, \; q \ge 2\\
e_{r} \otimes \omega_{I}, \; r \ge 2, (r,I) \ne
(3,(2,i_2,\dots,i_q,i_q+1)).
\end{split}
\end{equation}
\end{corollary}

\begin{proposition}
The differential $d_s, s \ge 3$, can be non trivial only in the
following case:
$$
d_s(e_1 \otimes e^2\wedge e^3 \wedge \dots \wedge e^{s-1} \wedge
\omega)=(-1)^s e_{s+1} \otimes e^2\wedge e^3\wedge \dots \wedge
e^{s-1} \wedge e^{s}\wedge \omega.
$$
\end{proposition}
\begin{proof}
A cocycle $\Psi_{I,r}$ (\ref{formulirovka}) represents $e_r
\otimes \omega_{I}$ and  obviously $d_s(\Psi_{I,r})=0, s \ge 2$.
Class $e_1\otimes \omega$ survives to $E_{\infty}$ if and only if
$\omega$ is divisible by $e^2\wedge e^3 \wedge \dots \wedge e^s$
for arbitrary $s \ge 2$. Hence no one class of the form $e_1
\otimes \omega$ can survive to $E_{\infty}$.

From the another hand  we have to take the quotient of $E_s$ over
the subspace of classes of the form $e_{s+1}\otimes e^2\wedge e^3
\wedge \dots \wedge e^s \wedge \tilde \omega, s \ge 2$. It means
that we have to remove from our final list the cocycles
$\Psi_{I,s+1}$ with $I$ such as $i_1=2, \dots, i_{s-1}=s$ if $s-1
< q$ as well as the cocycles $\Psi_{(2,3,\dots,s),s+1}$.
\end{proof}

Now we leave to the reader to prove the property
(\ref{property_Psi}) of the basic cocycles $\Psi_{I,r}$.
\end{proof}

\begin{remark}
It follows from the theorem \ref{main} that zero-cohomology of
$\mathfrak{m}_0$ is trivial:
$$H^0(\mathfrak{m}_0,\mathfrak{m}_0)=0.$$

One-dimensional cohomology $H^1(\mathfrak{m}_0,\mathfrak{m}_0)$ is
infinite dimensional, but its homogeneous components are finite
dimensional ones:
\begin{equation}
\begin{split}
 H^1_k(\mathfrak{m}_0,\mathfrak{m}_0)=0, k \le -1, \;
H^1_0(\mathfrak{m}_0,\mathfrak{m}_0)=\langle \Psi_{1,1},
\Psi_{2,2}\rangle, \\ H^1_1(\mathfrak{m}_0,\mathfrak{m}_0)=\langle
\Psi_{1,2} \rangle, \;
H^1_k(\mathfrak{m}_0,\mathfrak{m}_0)=\langle \Psi_{2,k+2}\rangle,
\; k \ge 2.
\end{split}
\end{equation}
The structure of $H^1(\mathfrak{m}_0,\mathfrak{m}_0)$  was the
subject in \cite{FialWag}, but it was found earlier in \cite{GKh1,
Kh}  (for instance our cocycles $\Psi_{1,1}, \Psi_{2,2}$ coinside
with $\omega_1, \omega_2$ in \cite{FialWag} and they are also
equal to $d_1$ and $d_2-2d_1$ in \cite{GKh1, Kh}). Let us recall
that an arbitrary positively graded Lie algebra $\mathfrak
g=\oplus_i {\mathfrak g}_i$ has a derivation $\tau$ defined by
means of its graded structure
$$\tau(X)=iX, \; X \in {\mathfrak
g}_i.$$ In our case $\tau = \Psi_{1,1}+2\Psi_{2,2}$. Another
obvious fact is that the one-dimensional cohomology $H^1(\mathfrak
g,\mathfrak g)$ endowed with the Nijehhuis-Richardson bracket is
isomorphic to the algebra of outer derivations of $\mathfrak g$.
\end{remark}

\begin{example}
We have defined the two-dimensional cocycle $\Psi_{i,i+1,r}$ by
(\ref{main_formula}) as
$$\Psi_{i,i+1,r}=\sum_{l=0}^{\infty}e_{r{+}l} \otimes
D_{-1}^l\omega_{i,i+1}.$$ An elementary exercise on the properties
of the operator $D_{-1}$ will be to verify that
$$
D_{-1}^l\omega_{i,i+1}=\sum_{s=0}^{i-2}(-1)^s\binom
{l+s}{s}e^{i-s}\wedge e^{i+1+s+l}.
$$
Hence we have the following formula
\begin{equation}
\Psi_{i,i+1,r}=\sum_{l=0}^{\infty}\sum_{s=0}^{i-2}(-1)^s\binom{l+s}{s}e_{r{+}l}
\otimes e^{i-s}\wedge e^{i+1+s+l}.
\end{equation}
It means that
$$
\Psi_{i,i+1,r}\left(e_{k},e_{m}\right)=(-1)^{i-k}\binom{m-i-1}{i-k}e_{m+k-2i-1},
\; 2 \le k \le i < m.
$$
Hence the cocycles $\Psi_{i,i+1,r}$ coinside with the basic
cocycles $\Psi_{i,r}$ considered by Khakimdjanov in \cite{Kh}
where he used Vergne's general algorithm  \cite{V}.

It is evident that a homogeneous subspace
$H^2_k(\mathfrak{m}_0,\mathfrak{m}_0)$ is infinite dimensional as
it was remarked in \cite{FialWag}.
\end{example}

\begin{example}
The infinite dimensional $H^3({\mathfrak m}_0,{\mathfrak m}_0)$ is
the space of formal series of basic cocycles
$$
\Psi_{(j,i,i+1),r}=e_r \otimes
\omega(e^j{\wedge}e^i{\wedge}e^{i+1})+e_{r+1}\otimes
D_{-1}\omega(e^j{\wedge}e^i{\wedge}e^{i+1})+\dots, r \ge 2, \; 2
\le j < i,
$$
where the cocycles $\left\{\Psi_{(2,i,i+1),3}, i>2\right\}$, and
$\Psi_{(2,3,4),5}$ have been removed from the list of basic
cocycles according to the rule from (\ref{formulirovka}).
\end{example}

\section{The scalar cohomology $H^*(\mathfrak{m}_2)$}

The operator $D_2=ad^*e_2: \Lambda^*(\mathfrak{b}) \to
\Lambda^*(\mathfrak{b})$ inducing ${\mathcal D}_2:
H^*(\mathfrak{b}) \to H^*(\mathfrak{b})$ can be defined by
\begin{equation}
\begin{split}
D_2(e^1)=D_2(e^4)=0,\; D_2(e^3)=e^1,\; D_2(e^i)= e^{i-2},
\: i\ge 5,\\
D_2(\xi\wedge \eta)=D_2(\xi)\wedge \eta +\xi\wedge D_2(\eta), \;
\forall \xi, \eta \in \Lambda^*(\mathfrak{b}).
\end{split}
\end{equation}

It is immediate that
\begin{equation}
\label{2-surj}
\begin{split}
{\mathcal D}_2(e^3)=e^1&, \quad {\mathcal D}_2(e^1)=0,\\
{\mathcal D}_2(\omega_{\mathfrak{b}}(e^3{\wedge} e^4))=0,& \;
{\mathcal D}_2(\omega_{\mathfrak{b}}(e^k{\wedge} e^{k+1}))=
-2\omega_{\mathfrak{b}}(e^{k-1}{\wedge} e^k), \; k \ge 4.
\end{split}
\end{equation}
\begin{proposition} Let $3\le i_1<\dots<i_{p-1}<i$ and
$\xi=e^{i_1}{\wedge}\dots{\wedge}e^{i_{p-1}}$, then
$$
\label{mathcal D_2} {\mathcal
D}_2(\omega_{\mathfrak{b}}(\xi{\wedge}e^{i}{\wedge}e^{i+1}))=
\omega_{\mathfrak{b}}((D_2+D_1^2)(\xi){\wedge}e^{i}{\wedge}e^{i+1})
-2\omega_{\mathfrak{b}}(\xi{\wedge}e^{i-1}{\wedge}e^{i}).$$
\end{proposition}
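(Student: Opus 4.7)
The plan is to exploit that $\mathrm{ad}\,e_2$ preserves the subalgebra $\mathfrak{b}=\langle e_1, e_3, e_4,\dots\rangle\subset\mathfrak{m}_2$, so that $D_2=\mathrm{ad}^*e_2$ commutes with the Chevalley--Eilenberg differential $d_{\mathfrak{b}}$ on $\Lambda^*(\mathfrak{b})$ and descends to the $\mathcal{D}_2$ on $H^*(\mathfrak{b})$ appearing in the proposition. It then suffices to compute $D_2\,\omega_{\mathfrak{b}}(\xi\wedge e^i\wedge e^{i+1})$ on the cochain level and identify the result modulo $d_{\mathfrak{b}}$-coboundaries. I would use the $\mathfrak{b}$-analog of (\ref{cocycles}), namely $\omega_{\mathfrak{b}}(\xi\wedge e^i\wedge e^{i+1})=\sum_{l\ge 0}(-1)^l D_1^l(\xi\wedge e^i)\wedge e^{i+1+l}$, together with the fact that $D_2$ is a degree-zero graded derivation of the exterior algebra.

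Applying the Leibniz rule splits the computation into two families, according to whether $D_2$ lands on the $D_1^l(\xi\wedge e^i)$ factor or on $e^{i+1+l}$. The second family is governed by $D_2 e^{i+1+l}=e^{i-1+l}$ (valid for $i+1+l\ge 5$); summed over $l$ it is of the same shape that was already handled in the case $\xi=1$ of (\ref{2-surj}), and term by term it yields $-2\,\omega_{\mathfrak{b}}(\xi\wedge e^{i-1}\wedge e^i)$ modulo exact forms. In the first family I would commute $D_2$ past each $D_1^l$ using the commutator of coadjoint operators induced by the bracket $[e_1,e_2]=e_3$, which rewrites $D_2 D_1^l$ as $D_1^l D_2$ plus iterated corrections involving $\mathrm{ad}^* e_3,\mathrm{ad}^* e_4,\dots$. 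The ``leading'' piece $D_1^l D_2(\xi\wedge e^i)=D_1^l(D_2\xi\wedge e^i)+D_1^l(\xi\wedge D_2 e^i)$ reassembles, via the definition of $\omega_{\mathfrak{b}}$, into $\omega_{\mathfrak{b}}(D_2\xi\wedge e^i\wedge e^{i+1})$, with the summand arising from $D_2 e^i=e^{i-2}$ absorbed into the $-2\omega_{\mathfrak{b}}(\xi\wedge e^{i-1}\wedge e^i)$ contribution already accounted for.

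The remaining commutator corrections are where the $D_1^2\xi$ term originates: iterating $\mathrm{ad}^*[e_1,\cdot]$ produces a telescoping cascade, and after collapsing the inner $\mathrm{ad}^* e_k$ actions via the identities $d\xi=e^1\wedge D_1\xi$ and $e^1\wedge\eta=dD_{-1}\eta$ recalled in Section 3 (valid for $\mathfrak{b}\cong\mathfrak{m}_0$), they reassemble modulo exact forms into exactly $\omega_{\mathfrak{b}}(D_1^2\xi\wedge e^i\wedge e^{i+1})$. The main obstacle is precisely this last bookkeeping step: tracking which iterated commutator corrections are coboundaries and which survive in cohomology. The cleanest way to close the argument is to invoke the uniqueness of $\omega_{\mathfrak{b}}$ as a cocycle specified modulo coboundaries by its leading monomial, thereby reducing the whole identity to a finite comparison of leading monomials in each fixed bidegree and sidestepping the full infinite resummation.
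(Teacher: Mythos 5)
The paper itself states this proposition without proof (it is recalled from \cite{FialMill}), so there is no internal argument to compare against; judged on its own, your proposal has the right global strategy but a wrong mechanism at its key step. The decisive idea is the one in your final sentence: every coboundary in $\Lambda^*(\mathfrak{b}^*)$ lies in the ideal generated by $e^1$, so a closed form free of $e^1$ equals $\sum_I\lambda_I\,\omega_{\mathfrak{b}}(e^I)$, where $\lambda_I$ is the coefficient of the unique monomial $e^I$ whose two largest superscripts are consecutive. Hence one only has to discard $e^1$-divisible monomials in $D_2\,\omega_{\mathfrak{b}}(\xi\wedge e^i\wedge e^{i+1})$ and collect the adjacent-top-pair monomials; this is exactly the device the author uses for the higher differentials in Section 6.

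The error is in where you locate the $D_1^2\xi$ term. The commutator $[D_2,D_1]$ is the derivation sending $e^4\mapsto e^1$ and annihilating every other generator, so all of your ``iterated commutator corrections'' take values in the ideal generated by $e^1$ and die in cohomology: they cannot reassemble into $\omega_{\mathfrak{b}}(D_1^2\xi\wedge e^{i}\wedge e^{i+1})$. That term in fact comes from the second family, which you assert yields only $-2\,\omega_{\mathfrak{b}}(\xi\wedge e^{i-1}\wedge e^{i})$: the $l=2$ summand $D_1^2(\xi\wedge e^i)\wedge D_2e^{i+3}=D_1^2(\xi\wedge e^i)\wedge e^{i+1}$ contains the leading monomial $D_1^2\xi\wedge e^i\wedge e^{i+1}$, the $l=0$ and $l=1$ summands each contribute $-\xi\wedge e^{i-1}\wedge e^{i}$ (whence the $-2$), and the summands with $l\ge 3$ contribute nothing. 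The first family contributes only $D_2\xi\wedge e^i\wedge e^{i+1}$ from $l=0$; its $\xi\wedge D_2e^i$ piece has no adjacent-pair monomial at all, rather than being ``absorbed'' into the $-2$ term. Followed literally, your plan would therefore lose the $D_1^2\xi$ contribution where it actually occurs and search for it among terms that are exact.
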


\begin{theorem}
\label{main_H_m_2} The bigraded cohomology algebra
$H^*(\mathfrak{m}_2)=\oplus_{q,k} H^q_k(\mathfrak{m}_2)$ is
spanned by cohomology classes of the following homogeneous
cocycles:
\begin{multline}
\label{w_cocycles}
e^1, \; e^2, \;  e^2 \wedge e^3, \; e^3\wedge e^4-e^2\wedge e^5, \\
w_{i_1{,} {\dots}{,} i_q{,} i_q{+}1{,} i_q{+}2}=
\sum_{l{\ge}1}\frac{1}{2^l}\omega\left({(}D_2{+}D_1^2{)}^l
{(}e^{i_1}{\wedge}{\dots} {\wedge} e^{i_q}{)} {\wedge}
e^{i_q{+}1{+}l}{\wedge} e^{i_q{+}2{+}l}\right),
\end{multline}
where $1\le q, \; 3\le i_1 <i_2<\dots <i_q$, in particular for $q
\ge 3$,
$$
{\rm dim} H_{k{+}\frac{q(q{+}1)}{2}}^q(\mathfrak{m}_2)=
P_q(k)-P_q(k{-}1)-P_q(k{-}2)+ P_q(k{-}3).
$$
\end{theorem}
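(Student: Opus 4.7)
The plan is to reduce the computation to the already-known scalar cohomology of $\mathfrak{m}_0$ by means of a Hochschild--Serre spectral sequence. First I would verify that $\mathfrak{b}=\langle e_1, e_3, e_4, \dots\rangle$ is an ideal of $\mathfrak{m}_2$ (the only brackets that could escape $\mathfrak{b}$ are those involving $e_2$, and $[e_2, e_1]=-e_3\in\mathfrak{b}$, $[e_2, e_j]=e_{j+2}\in\mathfrak{b}$ for $j\ge 3$); the re-indexing $e_1\mapsto e_1,\ e_k\mapsto e_{k-1}$ ($k\ge 3$) then identifies $\mathfrak{b}$ with $\mathfrak{m}_0$, while the quotient $\mathfrak{m}_2/\mathfrak{b}\cong\mathbb{K}\bar e_2$ is one-dimensional and abelian. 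Consequently $H^*(\mathfrak{b})$ is given by the theorem of Section~3 (hence the use of $\omega_\mathfrak{b}$), and $\bar e_2$ acts on it through the operator $\mathcal{D}_2$ of the preceding Proposition.

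Next I would set up the Hochschild--Serre spectral sequence
$$E_2^{p,q}=H^p(\mathbb{K}\bar e_2;\,H^q(\mathfrak{b}))\;\Longrightarrow\;H^{p+q}(\mathfrak{m}_2),$$
which is supported only in columns $p=0,1$, with $E_2^{0,q}=\ker\mathcal{D}_2|_{H^q(\mathfrak{b})}$ and $E_2^{1,q}=\operatorname{coker}\mathcal{D}_2|_{H^q(\mathfrak{b})}$. All differentials $d_r$ with $r\ge 2$ vanish for bidegree reasons, the spectral sequence degenerates at $E_2$, and the resulting two-step filtration splits over the field:
$$H^n(\mathfrak{m}_2)\cong\ker\mathcal{D}_2|_{H^n(\mathfrak{b})}\,\oplus\,\operatorname{coker}\mathcal{D}_2|_{H^{n-1}(\mathfrak{b})}.$$
Each kernel class $[\alpha]\in\ker\mathcal{D}_2$ lifts to a closed form $\alpha+e^2\wedge\beta$ in $C^*(\mathfrak{m}_2)$, where $\beta$ is any $d_\mathfrak{b}$-primitive of $D_2\alpha$ (which exists precisely because $[\alpha]\in\ker\mathcal{D}_2$); each cokernel class $[\gamma]$ lifts to a cocycle with leading term $e^2\wedge\gamma$.

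The key step is to identify $\ker\mathcal{D}_2$ explicitly by a telescoping argument. Applying the identity
$$\mathcal{D}_2\bigl(\omega_\mathfrak{b}(\xi\wedge e^i\wedge e^{i+1})\bigr)=\omega_\mathfrak{b}((D_2{+}D_1^2)\xi\wedge e^i\wedge e^{i+1})-2\omega_\mathfrak{b}(\xi\wedge e^{i-1}\wedge e^i)$$
of the preceding Proposition termwise to the formal geometric series $\sum_{l\ge 0}2^{-l}\omega_\mathfrak{b}\bigl((D_2{+}D_1^2)^l(e^{i_1}\wedge\dots\wedge e^{i_q})\wedge e^{i_q+1+l}\wedge e^{i_q+2+l}\bigr)$, the ``shift-down'' contribution at level $l$ cancels against the ``shift-up'' contribution at level $l{+}1$ because the geometric ratio $\tfrac12$ is exactly compensated by the coefficient $-2$, while the putative ``$l=0$ shift-up'' term $\omega_\mathfrak{b}(\xi_0\wedge e^{i_q}\wedge e^{i_q+1})$ vanishes by the repeated factor $e^{i_q}$. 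This produces the cocycles $w_{i_1,\dots,i_q,i_q+1,i_q+2}$; triangularity of $\mathcal{D}_2$ with respect to the filtration by the final index $i$ of $\omega_\mathfrak{b}(\xi\wedge e^i\wedge e^{i+1})$ shows that the $w$'s exhaust $\ker\mathcal{D}_2$, each $\mathcal{D}_2$-chain containing exactly one ``top-weight'' kernel representative of $w$-type. The four exceptional low-degree classes come from direct inspection: $e^2$ from $\operatorname{coker}\mathcal{D}_2|_{H^0(\mathfrak{b})}$, $e^2\wedge e^3$ from $\operatorname{coker}\mathcal{D}_2|_{H^1(\mathfrak{b})}$ (with image $\langle e^1\rangle$ since $\mathcal{D}_2 e^3=e^1$), and $e^3\wedge e^4-e^2\wedge e^5$ as the Hochschild--Serre lift of the unique kernel class of $\mathcal{D}_2|_{H^2(\mathfrak{b})}$, the correction $-e^5$ being the primitive of $D_2(e^3\wedge e^4)=e^1\wedge e^4=-d_\mathfrak{b}(e^5)$.

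Finally, for $q\ge 3$ the dimension formula reduces to a weight-graded rank--nullity calculation. Since $\mathcal{D}_2$ lowers weight by $2$ and $\dim H^q_{k+q(q+1)/2}(\mathfrak{b})=P_q(k)-P_q(k-1)$ by Section~3, the kernel has dimension $[P_q(k)-P_q(k-1)]-[P_q(k-2)-P_q(k-3)]$; the cokernel contribution from $H^{q-1}(\mathfrak{b})$ vanishes in these degrees because $\mathcal{D}_2$ is surjective on every weight component of $H^{q-1}(\mathfrak{b})$ once $q-1\ge 2$, the only failures of surjectivity concentrating into the low-weight exceptional classes already accounted for. The main technical hurdle is the telescoping verification that $\mathcal{D}_2 w=0$ in cohomology together with the exhaustion claim for the family $\{w_{i_1,\dots,i_q,i_q+1,i_q+2}\}$; once these are in hand, the remainder is routine bookkeeping around the $\mathfrak{b}\leftrightarrow\mathfrak{m}_0$ identification and the weight-grading shifts induced by $\mathcal{D}_2$.
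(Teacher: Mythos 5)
The paper itself offers no proof of this theorem---it is recalled from \cite{FialMill}---but the machinery the paper assembles around the statement (the ideal $\mathfrak{b}=\langle e_1,e_3,e_4,\dots\rangle\cong\mathfrak{m}_0$, the induced operator $\mathcal{D}_2$ on $H^*(\mathfrak{b})$, and the formula for $\mathcal{D}_2(\omega_{\mathfrak{b}}(\xi\wedge e^i\wedge e^{i+1}))$) is exactly the Hochschild--Serre scaffolding you reconstruct, and your two-column degeneration at $E_2$, the telescoping verification that $\mathcal{D}_2 w_I=0$, and the rank--nullity derivation of the dimension formula are all correct and in the spirit of the cited source. The only genuinely unproved step, which you rightly single out, is the exhaustion claim---that $\mathcal{D}_2$ is surjective on the relevant weight components of $H^{q}(\mathfrak{b})$ for $q\ge 2$ so that all cokernel contributions beyond the four exceptional low-degree classes vanish; that is where the substantive work of \cite{FialMill} lies, and your sketch correctly identifies it as the remaining hurdle rather than glossing over it.
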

\begin{example}
\begin{multline*}
w_{5,6,7}=\omega(e^5{\wedge} e^6 {\wedge} e^7) +
\omega(e^3{\wedge} e^7{\wedge}e^8)
=e^5{\wedge} e^6 {\wedge}e^7+(e^3{\wedge} e^7{-}e^4{\wedge} e^6){\wedge} e^8+\\
{+}(e^4 {\wedge} e^5{-}e^2{\wedge} e^7){\wedge} e^9+
(e^2{\wedge}e^6{-}e^3 {\wedge}e^5){\wedge}e^{10} +e^3{\wedge}
e^4{\wedge} e^{11}- e^2{\wedge} e^4{\wedge} e^{12}+e^2{\wedge}
e^3{\wedge} e^{13}.
\end{multline*}
\end{example}

1) The space $H^2(\mathfrak{m}_2)$ is two-dimensional and it is
spanned by the cohomology classes represented by cocycles
$e^2\wedge e^3$ and $e^3\wedge e^4-e^2\wedge e^5$ of second
gradings $5$ and $7$ respectively;

2) $H^3(\mathfrak{m}_2)$ is infinite dimensional and it is spanned
by
$$w_{k,k{+}1,k{+}2}=
\sum_{l\ge 0}\omega\left(e^{k-2l}{\wedge} e^{k+1+l}{\wedge}
e^{k+2+l}\right), \; k\ge 3.$$

\section{Adjoint cohomology $H^*({\mathfrak{m}_2}, {\mathfrak{m}_2}$)}
\begin{theorem}
\label{main} The bigraded cohomology
$H^*(\mathfrak{m}_2,\mathfrak{m}_2)=\oplus_{p,k}
H^p_k(\mathfrak{m}_2,\mathfrak{m}_2)$ is an infinite dimensional
linear space of formal series $\sum_{J,s} \alpha_{J,s}\Phi_{J,s}$
of the following infinite system $\{\Phi_{J,s}\}$ of homogeneous
cocycles:
\begin{equation}
\label{formulirovka}
\begin{split}
\Phi_{1,1}=\sum_{j=1}^{\infty}j e_j\otimes e^j,\;
\Phi_{2,l+2}=\sum_{j=2}^{\infty}e_{l{+}j}\otimes e^j, \; l \ge 3,
\quad \Phi_{2,3,m}=\sum_{j=0}^{\infty}e_{m{+}j}\otimes e^2\wedge
e^{3+j},\;m=3, m\ge 7,\\
\Phi_{2,3,1}=e_1\otimes e^2{\wedge}
e^{3}+\frac{1}{2}\sum_{j=0}^{\infty}e_{5{+}j}\otimes
\left(e^4{\wedge} e^{5{+}j}{-}(j{+}1)e^3{\wedge}
e^{6{+}j}{+}\frac{(j{+}2)(j{+}1)}{2}e^2{\wedge} e^{7{+}j}\right),\\
\Phi_{2,3,2}=\sum_{i=0}^{\infty}e_{2{+}i}\otimes e^2{\wedge}
e^{3+i}+\frac{1}{2}\sum_{j=0}^{\infty}e_{6{+}j}\otimes
\left(e^4{\wedge} e^{5{+}j}{-}(j{+}1)e^3{\wedge}
e^{6{+}j}{+}\frac{(j{+}2)(j{+}1)}{2}e^2{\wedge} e^{7{+}j}\right),\\
\Phi_{3,4,l}=\sum_{i=0}^{\infty}e_{l{+}i}\otimes \left(e^3\wedge
e^{4+i}-(i+1) e^2\wedge e^{5+i}\right), \; l \ge 3,\\
 \Phi_{I,r}=\sum_{j=0}^{\infty}e_{r{+}j}
\otimes \tilde D_{-1}^jw_I, \;r {\ge} 3, \;
I{=}(i_1,\dots,i_q,i_q+1,i_q+2),\quad q{\ge}1, \quad 3\le
i_1{<}{\dots} {<}i_q,\\
{\rm if}\;\; r=4, q\ge 2, \;\;{\rm then}\;\; i_1> 3,\\ {\rm
if}\;\; 5\le r \le q+3,\; r= q+6,\;\;{\rm then}\; i_{r-4}>r-1,
\;\;{\rm or}\;\;
i_{r-4}=r-1,\; i_1>3,\\
{\rm if}\;\; r=q+5, \;\;{\rm then}\;\; i_{r-3} > r-1.
\end{split}
\end{equation}
with $w_I$ stands for a basic scalar cocycle defined by
(\ref{w_cocycles}), $\tilde D_{-1}^j w_{I}$ defined by induction
by $d\tilde D_{-1}^j w_{I}= e^1\wedge \tilde
D_{-1}^{j-1}w_{I}+e^2\wedge D_{-1}^{j-2}w_{I}$ and $d\tilde D_{-1}
w_{I}= e^1\wedge \tilde w_{I}$.

The homogeneous cocycles $\Phi_{J,s}$ have the following gradings:
\begin{equation}
\label{main_formula}
\begin{split}
\Phi_{1,1} \in H^1_0(\mathfrak{m}_2,\mathfrak{m}_2), \;
\Phi_{2,l+2} \in H^1_l(\mathfrak{m}_2,\mathfrak{m}_2),
\Phi_{2,3,l} \in H^2_{l-5}(\mathfrak{m}_2,\mathfrak{m}_2),
\Phi_{3,4,l} \in H^2_{l-7}(\mathfrak{m}_2,\mathfrak{m}_2),
\\ \Phi_{I,r} \in H^{q+2}_k(\mathfrak{m}_0,\mathfrak{m}_0), \; I{=}(i_1,\dots,i_q,i_q+1,i_q+2), \;k
=r-\left(i_1{+}i_2{+}\dots{+}i_{q-1}{+}3i_q{+}3\right).
\end{split}
\end{equation}
The cocycle $\Phi_{I,r}$ with
$I{=}(i_1,i_2,\dots,i_q,i_q+1,i_q+2)$ is uniquely determined by
the following condition:
\begin{equation}
\label{property}
\begin{split}
\Phi_{I,r}\left(e_{i_1},e_{i_2},\dots,e_{i_q},e_{i_q+1},e_{i_q+2}\right)=e_r,
\quad 2\le i_1{<}{\dots} {<}i_q,\\
\Phi_{I,r}\left(e_{j_1},e_{j_2},\dots,e_{j_q},e_{j_q+1},e_{j_q+2}\right)=0,
\quad 2\le j_1{<}{\dots} {<}j_q, \; (j_1,\dots,j_q,j_{q}+1,j_q+2)
\ne I.
\end{split}
\end{equation}
\end{theorem}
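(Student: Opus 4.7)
The plan is to apply the spectral sequence of Proposition \ref{spectralseq} to $\mathfrak{g} = \mathfrak{m}_2$, whose first term is $E_1^{p,q} = (\mathfrak{m}_2)_q \otimes H^{p+q}(\mathfrak{m}_2)$, with $H^*(\mathfrak{m}_2)$ given by Theorem \ref{main_H_m_2}. The scheme mirrors the $\mathfrak{m}_0$ computation of Section \ref{Computations}, but two new features must be accommodated: the adjoint differential on $e_j$ now has two terms (one for each generator $e_1, e_2$), and the scalar cocycles $w_I$ are themselves infinite sums built with $D_2+D_1^2$.

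First I would compute $d$ on the generators. From $[e_1,e_j]=e_{j+1}$ and $[e_2,e_j]=e_{j+2}$ one gets, for $j\ge 3$, $de_j = -e_{j+1}\otimes e^1 - e_{j+2}\otimes e^2$, and more generally for any scalar cocycle $\omega$ and $q\ge 3$,
$$d(e_q\otimes \omega) = -e_{q+1}\otimes(e^1\wedge\omega) - e_{q+2}\otimes(e^2\wedge\omega) + (\text{higher filtration}).$$
Thus the non-triviality of $d_1$ and $d_2$ on the $E_r$ page is governed, respectively, by the classes of $e^1\wedge\omega$ and $e^2\wedge\omega$ in $H^*(\mathfrak{m}_2)$. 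Since $e^1\wedge\xi=dD_{-1}\xi$ is always exact but $e^2\wedge\omega$ is typically non-trivial, the latter product is the source of all the anomalies absent in the $\mathfrak{m}_0$ case.

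To cancel these obstructions iteratively, I would use the operator $\tilde D_{-1}$ defined in the statement by the second-order recursion $d\tilde D_{-1}^j w_I = e^1\wedge \tilde D_{-1}^{j-1}w_I + e^2\wedge \tilde D_{-1}^{j-2}w_I$. A term-by-term differentiation then shows that the formal series $\Phi_{I,r}=\sum_{j\ge 0}e_{r+j}\otimes\tilde D_{-1}^j w_I$ is closed: the $e^1$- and $e^2$-contributions from $de_{r+j}$ telescope against $e_{r+j+1}\otimes d\tilde D_{-1}^{j+1}w_I$ and $e_{r+j+2}\otimes d\tilde D_{-1}^{j+2}w_I$ respectively. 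The main technical obstacle is to prove existence of the sequence $\tilde D_{-1}^j w_I$ solving this recursion for all $j$: this is a second-order analogue of the $D_{-1}$-construction and requires a careful decomposition of $\Lambda^*(\mathfrak{m}_2^*)$ compatible with both $D_1$ and $D_2$, together with the fact that $w_I$ is already built out of $D_2+D_1^2$, which is precisely what makes the recursion consistent.

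To identify which classes of $E_1$ survive to $E_\infty$ I would proceed as for $\mathfrak{m}_0$. Classes of the form $e_1\otimes\omega$ cannot survive because $\omega$ would have to be divisible by $e^2\wedge e^3\wedge\dots\wedge e^s$ for every $s\ge 2$. Moreover one must quotient by coboundaries of the form $e_{r}\otimes e^2\wedge e^3\wedge\dots\wedge e^{r-1}\wedge\tilde\omega$, which forces removal of exactly the basic cocycles listed under the inequalities on $i_{r-4}$ and $r$ in the statement; this is the combinatorial bookkeeping analogous to Corollary \ref{maincorl} but sensibly richer because $e^2\wedge$ now contributes as well. The exceptional cocycles $\Phi_{1,1}$, $\Phi_{2,l+2}$, $\Phi_{2,3,l}$, $\Phi_{3,4,l}$ correspond to small values of $r$ where the correcting shifts involve $e_1$ or $e_2$ and the explicit formulas must be produced by hand using the recursion for $\tilde D_{-1}$ on the first levels. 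The uniqueness property \eqref{property} is then verified by inspecting the leading monomial $e_{i_1}\wedge\dots\wedge e_{i_q+1}\wedge e_{i_q+2}$ of $w_I$, which is not touched by the higher-filtration corrections.
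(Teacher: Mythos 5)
Your overall strategy is the paper's: the Feigin--Fuchs spectral sequence with $E_1^{p,q}=(\mathfrak{m}_2)_q\otimes H^{p+q}(\mathfrak{m}_2)$, correction of $e_r\otimes w_I$ by a tower $\tilde D_{-1}^jw_I$, and a survival analysis for the remaining classes. But two steps that carry the real content are either missing or transplanted incorrectly from the $\mathfrak{m}_0$ case.

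First, the existence of the tower $\tilde D_{-1}^jw_I$. You correctly flag this as the main technical obstacle, but the resolution you gesture at (``a careful decomposition of $\Lambda^*(\mathfrak{m}_2^*)$ compatible with both $D_1$ and $D_2$'') is not what makes the recursion solvable. The paper's mechanism is the elementary but decisive observation that in $C^*(\mathfrak{m}_2)$ a form of the shape $e^1\wedge\xi_1+e^2\wedge\xi_2$ is closed if and only if it is exact; one then checks by induction that $e^1\wedge\tilde D_{-1}^{j-1}w+e^2\wedge\tilde D_{-1}^{j-2}w$ is closed, hence a primitive $\tilde D_{-1}^jw$ exists. Without this lemma (or a substitute) your $\Phi_{I,r}$ are not yet constructed, so the proposal at this point is a restatement of the problem rather than a proof.

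Second, the survival analysis. Your criterion ``$e_1\otimes\omega$ cannot survive because $\omega$ would have to be divisible by $e^2\wedge e^3\wedge\dots\wedge e^s$ for every $s$'' and your description of the classes to be quotiented out as $e_r\otimes e^2\wedge e^3\wedge\dots\wedge e^{r-1}\wedge\tilde\omega$ are the $\mathfrak{m}_0$ statements verbatim, and they do not describe what happens for $\mathfrak{m}_2$. Here the targets of the higher differentials are the scalar cocycles $w_{3,4,5,\dots}$ with an initial consecutive string of indices (e.g.\ $d_3(e_1\otimes w_{i_1,\dots})=e_4\otimes w_{3,i_1,\dots}$, $d_5(e_1\otimes(e^3\wedge e^4-e^2\wedge e^5))=e_6\otimes w_{3,4,5}$), not wedge products with $e^2\wedge e^3\wedge\cdots$. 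More importantly, because $e_2$ is a second generator of $\mathfrak{m}_2$, the classes $e_2\otimes w_J$ also carry nontrivial differentials ($d_4$, $d_5$ and higher), landing in $e_{s+2}\otimes w_{3,4,\dots}$; your proposal never mentions these. It is exactly this second family of differentials that produces the separate exclusion conditions indexed by $r=q+5$ versus $5\le r\le q+3$, $r=q+6$ in the statement, and it also accounts for the exceptional low-degree answers ($\Phi_{2,3,m}$ existing only for $m=3$ and $m\ge 7$, $\Phi_{2,l+2}$ only for $l\ge 3$, and the corrected forms of $\Phi_{2,3,1}$, $\Phi_{2,3,2}$). As written, your bookkeeping would not reproduce the list in the theorem.
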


\begin{proof}
We consider the spectral sequence considered above. It follows
from the Proposition \ref{spectralseq} that
$$E_1^{p,q}=({\mathfrak{m}_2})_q \otimes H^{p{+}q}({\mathfrak{m}_2}).$$
\begin{proposition}
1) The first differential $d_1:E_1^{p,q}\to E_1^{p,q+1}$ is non
trivial  only in the cases:
$$
d_1(e_q)=-e_{q+1}\otimes e^1, \; q \ge 2.
$$

2) The second differential $d_2:E_2^{p,q}\to E_2^{p-1,q+2}$ has
the only one non trivial value:
$$
d_2(e_1)=e_3 \otimes e^2.
$$
\end{proposition}
\begin{proof}
We start with
\begin{equation}
\begin{split}
d e_1 = \sum_{j=2}^{\infty} e_{j+1} \otimes e^j, \quad d e_2 =-e_3
\otimes e^1+ \sum_{j=2}^{\infty} e_{j+2} \otimes e^j, \quad d e_j
= -
e_{j{+}1} \otimes e^1-e_{j{+}2} \otimes e^2, \; j \ge 3;\\
d (e_1 \otimes \omega)=e_3\otimes e^2 {\wedge}\omega +\dots, \quad
d(e_q \otimes \omega)=e_{q+1}\otimes e^1 \wedge \omega +\dots \
\end{split}
\end{equation}

We recall that the products $e^1 \wedge \omega$ and $e^2 \wedge
\omega$ and are always trivial in cohomology $H^*(\mathfrak{m}_2)$
for an arbitrary closed form $\omega$. Also one can remark that a
form $e^1 \wedge \xi_1 + e^2 \wedge \xi_2$ is closed if and anly
if it is exact. Hence we can shift our form $e_q \otimes \omega$
by $e_{q+1} \otimes {\tilde D}_{-1}\omega$, where $d{\tilde
D}_{-1}\omega=e^1 \wedge \omega$ and see that
$$
d(e_q \otimes \omega + e_{q+1} \otimes \tilde
D_{-1}\omega)=-e_{q+2} \otimes \left(e^1 \wedge \tilde
D_{-1}\omega+e^2\wedge \omega \right)+\dots
$$
We put everywhere dots instead of terms of higher filtration.
\end{proof}

It follows from the Proposition \ref{umnozh} that the class
$e^2\wedge \omega$ is trivial if and only if $\omega = e^2 \wedge
\tilde \omega$, for some $\tilde \omega$. We came to the following
corollary:

\begin{corollary}
\label{maincorl} The following classes in $E_1$ survive to $E_3$:
\begin{equation}
\begin{split}
e_1 \otimes e^1,  \quad e_2 \otimes e^1, \quad e_1 \otimes e^2,\quad e_2 \otimes e^2,\quad e_q \otimes e^2,\; q \ge 4,\\
e_l\otimes e^2\wedge e^3, e_l\otimes \left(e^3\wedge e^4-e^2\wedge
e^5\right), l \ge 1, \quad e_{r} \otimes w_{I}, \; r \ge 1.
\end{split}
\end{equation}
\end{corollary}

\begin{proposition}
The differentials $d_s, 3 \le s \le 4$ are non trivial in the
following cases:
\begin{multline}
\label{d_4}
 d_3(e_1 \otimes e^2)=-e_4\otimes e^2\wedge
e^3,\;d_3(e_1\otimes w_{i_1,i_2,i_3,\dots,i_q{+}2})=e_4\otimes
w_{3,i_1,i_2,i_3,\dots,i_q{+}2},\\
d_3(e_2 \otimes e^2)=-e_5\otimes e^2\wedge e^3,\;
d_3(e_2\otimes w_{i_1,i_2,i_3,\dots,i_q{+}2})=e_5\otimes w_{3,i_1,i_2,i_3,\dots,i_q{+}2}, i_1 > 3, \\
d_4(e_2\otimes e^1)=2e_6\otimes e^2\wedge e^3,\; d_4(e_2\otimes
w_{3,i_2,i_3,\dots,i_q{+}2})= -e_6\otimes
w_{3,4,i_2,i_3,\dots,i_q{+}2}, i_2 >4.
\end{multline}
\end{proposition}

\begin{proof}
We will prove some of the formulas (\ref{d_4}), the rest of them
can be obtained analogously
\begin{multline}
 d(e_1 \otimes e^2)=e_4\otimes e^3\wedge e^2+\dots,\quad
 d(e_2 \otimes e^2+ e_3 \otimes e^3+ e_4 \otimes e^4)=e_5\otimes \left(-2e^2\wedge e^3-e^1\wedge e^4\right)+\dots,\\
 d(e_2 \otimes e^1+ e_5 \otimes e^4)=-2e_6\otimes e^1\wedge e^4+\dots,\\
d(e_1{\otimes}w_{i_1,\dots,i_q+2}+e_3\otimes
\xi)=e_4\otimes(e^3{\wedge}w_{i_1,\dots,i_q+2}-e^1{\wedge}\xi)+\dots=\\
=e_4\otimes(e^3{\wedge}w_{i_1,\dots,i_q+2}+e^2{\wedge}D_2D_{-1}\xi-dD_{-1}\xi)+\dots,\;\;
d\xi=e^2{\wedge}w_{i_1,\dots,i_q+2}.
\end{multline}
It is easy to see that in the decomposition
$e^3{\wedge}w_{i_1,\dots,i_q+2}+e^2{\wedge}D_2D_{-1}\xi$ we have
the only one monomial
$e^3{\wedge}e^{i_1}{\wedge}\dots{\wedge}e^{i_q}{\wedge}e^{i_q+1}{\wedge}e^{i_q+2}$
with neighboring three last superscripts and it follows that the
expression is cohomologous to $w_{3,i_1,\dots,i_q+2}$ if $i_1 > 3$
and it is cohomologous to zero if $i_1=3$.
\end{proof}

\begin{proposition}
The differential $d_5$ is non trivial in the following cases:
\begin{multline}
\label{d_5}
d_5\left(e_1 \otimes (e^3\wedge e^4-e^2\wedge
e^5)\right)=e_6\otimes w_{3,4,5}, \quad d_5\left(e_2 \otimes
(e^3\wedge e^4-e^2\wedge e^5)\right)=e_7\otimes w_{3,4,5}, \\
d_5\left(e_1 \otimes w_{3,i_2,i_3,\dots,i_q{+}2}\right)=
-e_6\otimes w_{3,5,i_2,i_3,\dots,i_q{+}2}, i_2 >5,\\
d_5\left(e_2 \otimes w_{3,4,i_3,\dots,i_q{+}2}\right)=e_7 \otimes
w_{3,4,5,i_3,\dots,i_q{+}2}, i_3 > 5.
\end{multline}
\end{proposition}

\begin{proof}
Again we will not prove all the formulas (\ref{d_5}), leaving the
rest of them to the reader. For instance
\begin{multline}
d(e_1{\otimes}(e^3{\wedge}e^4-e^2{\wedge}
e^5)+\frac{1}{2}e_3{\otimes} (e^4{\wedge} e^5{-}e^3{\wedge}
e^6{+}e^2{\wedge} e^7)+\frac{1}{2}e_4{\otimes}(e^4{\wedge} e^6
{-}2 e^3 {\wedge} e^7 {+}3
e^2{\wedge}e^8)+\\
+\frac{1}{2}e_5\otimes(e^5\wedge e^6-2e^3\wedge e^8 +5e^2 \wedge
e^9))=e_6\otimes(e^5{\wedge}e^3{\wedge}e^4-\frac{1}{2}e^2{\wedge}(e^4{\wedge}e^6{-}2e^3{\wedge}e^7)
-\frac{1}{2}e^2{\wedge}e^4{\wedge}e^6{-}d\xi)+\dots,\\
\xi=\frac{1}{2}(e^5{\wedge}e^7-e^4{\wedge}e^8-e^3{\wedge}e^9+6e^2{\wedge}e^{10}),\;
w_{3,4,5}=e^3{\wedge}e^4{\wedge}e^5-e^2{\wedge}e^4{\wedge}e^6+e^2{\wedge}e^3{\wedge}e^7.
\end{multline}
\end{proof}

\begin{proposition}
The differential $d_s, s \ge 6$ can be non trivial only at the
classes of the form $e_1\otimes w_I$ or $e_2\otimes w_J$ and only
in the following cases:
\begin{multline}
J=(j_1,\dots,j_q,j_q+1,j_q+2), j_1=3, j_2=4,\dots, j_{s-3}=s-1,\;\;q >s-3, \;j_q>s;\\
I=(i_1,\dots,i_q,i_q+1,i_q+2), \; i_1=3,i_{s-4}=s-1, \;\; q > s-4,\;i_q>s;\\
I=(i_1,\dots,i_q,i_q+1,i_q+2), \; i_1=3,i_{s-4}=s-2, \;\; q > s-3,\;i_q>s;\\
I=J=(3,4,5,\dots,s-1).\\
\end{multline}
\end{proposition}
\begin{proof}
We will sketch the proof of this proposition in the spirit of
previous two propositions. The idea is the same: one has to keep
an eye only on the monomials of the form $e^{i_1}{\wedge}\dots
{\wedge}e^{i_q}{\wedge}e^{i_q+1}{\wedge}e^{i_q+2}$ in the
decomposition of scalar cocycles.
\end{proof}
Now a few words about forms $\tilde D_{-1}^jw$. We give no
explicit expressions for them like in the case of ${\mathfrak
m}_0$. However it is possible to write them out. For instance one
can define $\tilde D_{-1}w_{i_1,\dots,i_q+2}$ in a following way:
$$
\tilde D_{-1}w_{i_1,\dots,i_q+2} =D_{-1}w_{i_1,\dots,i_q+2}
-\sum_{s=0}\frac{(s+1)}{2^s}\omega\left((D_1^2+D_2)^sD_1(e^{i_1}{\wedge}\dots{\wedge}e^{i_q}){\wedge}e^{i_q+2+s}{\wedge}e^{i_q+3+s}\right).
$$
Now the next step is to define a form $\tilde D_{-1}^2w$ such that
\begin{equation}
\label{tilde} d\tilde D_{-1}^2w=e^1\wedge \tilde D_{-1}w+e^2\wedge
w.
\end{equation}
On the right hand side we have a closed form and it has the form
$e^1\wedge \xi +e^2\wedge \eta$. Hence it is the exact form and we
will denote by $\tilde D_{-1}^2w$ an arbitrary form satisfying
(\ref{tilde}).

Now we come to the inductive procedure. Let us assume that we have
found forms $\tilde D_{-1}^{j-1}w$ and $\tilde D_{-1}^{j-2}w$ such
that
$$
d\tilde D_{-1}^{j-1}w=e^1\wedge \tilde D_{-1}^{j-2}w+e^2 \wedge
D_{-1}^{j-3}w, \; d\tilde D_{-1}^{j-2}w=e^1\wedge \tilde
D_{-1}^{j-3}w+e^2 \wedge D_{-1}^{j-4}w.
$$
Then it is easy to see that the form $e^1\wedge \tilde
D_{-1}^{j-1}w+e^2 \wedge \tilde D_{-1}^{j-2}w$ is closed and hence
it is exact. We can define $D_{-1}^{j}w$ such that
$$
d\tilde D_{-1}^{j}w=e^1\wedge \tilde D_{-1}^{j-1}w+e^2 \wedge
D_{-1}^{j-2}w.
$$
Now it is evident that the element
$$
\Phi_{J,r}=\sum_{j=0}^{+\infty}e_{r+j}\otimes \tilde D_{-1}^jw_J,
\; J=(j_1,\dots,j_q,j_q+1,j_q+2),
$$
is closed element in $C^{q+2}({\mathfrak m}_0,{\mathfrak m}_0)$
and represents the class $e_{r}\otimes w_J$ in $E_1$ term.
\end{proof}

\begin{example}
The infinite basis of $H^2({\mathfrak m}_2,{\mathfrak m}_2)$
consists of the following cocycles:
\begin{multline}
\Phi_{2,3,m}=\Psi_{2,3,m},\;m=3, m\ge 7,\;
\Phi_{3,4,l}=\Psi_{3,4,l}, \; l \ge 3,\\
\Phi_{2,3,1}=e_1\otimes e^2{\wedge}
e^{3}+\frac{1}{2}\Psi_{4,5,5},\;
\Phi_{2,3,2}=\Psi_{2,3,2}+\frac{1}{2}\Psi_{4,5,6}.
\end{multline}
It was established by Vergne that an arbitrary two-dimensional
adjoint cocycle of ${\mathfrak m}_2$ vanishing at $e_1$ is
determined by its values on pairs $e_2, e_3$ and $e_3, e_4$
respectively \cite{V}. But we see that some of them, namely
$\Psi_{2,3,m},\;m=4,5,6,$ are coboundaries.
\end{example}

\begin{example}
The infinite dimensional $H^3({\mathfrak m}_0,{\mathfrak m}_0)$ is
the space of formal series of basic cocycles
$$
\Phi_{(i,i+1,i+2),r}=e_r \otimes w_{i,i+1,i+2}+e_{r+1}\otimes
\tilde D_{-1}w_{i,i+1,i+2}+\dots, r \ge 3, \; 3 \le i,
$$
where we have removed the cocycles $\Phi_{(3,4,5),6}$ and
$\Phi_{(3,4,5),7}$ from the list.
\end{example}

\begin{corollary}
1) $H^0(\mathfrak{m}_2,\mathfrak{m}_2)=0$.

2) $H^1(\mathfrak{m}_2,\mathfrak{m}_2)$ is infinite dimensional
and
$$  ~~{\rm dim} H_q^1(\mathfrak{m}_2,\mathfrak{m}_2)=
\left\{\begin{array}{r}
   0, \; q \le -1 \quad {\rm or}\quad q=1,\\
   1, \quad q \ge 2 \quad {\rm or} \quad q=0.
   \end{array} \right . \hspace{3.3em} $$

3) $H^2(\mathfrak{m}_2,\mathfrak{m}_2)$ is infinite dimensional
and
$$  ~~{\rm dim} H_q^2(\mathfrak{m}_2,\mathfrak{m}_2)=
\left\{\begin{array}{r}
   0, \; q \le -5,\\
   1, \; q=-1,0,1, \\
   2, \; q \ge 2 \quad {\rm or} \quad q=-4,-3,-2. \\
   \end{array} \right . \hspace{3.3em} $$
\end{corollary}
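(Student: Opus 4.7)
The plan is to derive this as an immediate corollary of Theorem \ref{main}, by enumerating the basic cocycles $\Phi_{J,s}$ of cohomological degree $\leq 2$ and recording their second grading via (\ref{main_formula}). A preliminary observation that eliminates most of the list at once is that the family $\Phi_{I,r}$ with $I=(i_1,\dots,i_q,i_q+1,i_q+2)$ forces $q \geq 1$ and so has cohomological degree $q+2 \geq 3$; these therefore contribute nothing to $H^0$, $H^1$, or $H^2$, and the count uses only the ``sporadic'' cocycles written out explicitly at the head of (\ref{formulirovka}).

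For $H^0$ the list is empty in cohomological degree zero, giving $H^0(\mathfrak{m}_2,\mathfrak{m}_2)=0$. For $H^1$ the only relevant cocycles are $\Phi_{1,1}\in H^1_0$ and $\Phi_{2,l+2}\in H^1_l$; reading off the grading formula yields precisely one generator in grading $0$, none in grading $1$ (reflecting that $e_2\otimes e^1$ was killed by $d_4$ in the proof of Theorem \ref{main}, while $e_1\otimes e^2$ and $e_2\otimes e^2$ were killed by $d_3$), and one generator in every grading $q \geq 2$ coming from $\Phi_{2,q+2}$.

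For $H^2$ four families contribute: the isolated $\Phi_{2,3,1}\in H^2_{-4}$ and $\Phi_{2,3,2}\in H^2_{-3}$; the family $\Phi_{2,3,m}\in H^2_{m-5}$ with $m=3$ or $m\geq 7$; and the family $\Phi_{3,4,l}\in H^2_{l-7}$ with $l \geq 3$. I would tabulate how many of these land in each $H^2_q$: two generators in each of $q=-4,-3,-2$ (the pairs $(m,l)=(1,3),(2,4),(3,5)$); one generator in each of $q=-1,0,1$ (only $\Phi_{3,4,6},\Phi_{3,4,7},\Phi_{3,4,8}$, since $\Phi_{2,3,m}$ is absent for $m=4,5,6$); and two generators in every $q \geq 2$, one from each family. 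This matches the stated dimensions.

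The main point to justify, rather than an outright obstacle, is the conspicuous ``gap'' at $m=4,5,6$: it records the fact, already established inside the proof of Theorem \ref{main}, that the differentials $d_3,d_4,d_5$ turn the would-be classes $\Psi_{2,3,m}$ into coboundaries, as highlighted in the example just before the corollary. This is imported here as a black box, after which the three dimension formulas are a bookkeeping exercise.
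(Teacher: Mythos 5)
Your proposal is correct and coincides with the paper's own (unwritten) derivation: the corollary is obtained by tabulating the gradings of the degree $\le 2$ cocycles listed in the theorem, the whole family $\Phi_{I,r}$ being irrelevant since it sits in cohomological degree $q+2\ge 3$, and your bookkeeping for $H^0$ and $H^2$ (including the gap at $m=4,5,6$ coming from $d_3,d_4,d_5$) matches exactly. One caveat: your count $\dim H^1_2=1$ uses the generator $\Phi_{2,4}$, i.e.\ tacitly replaces the constraint ``$l\ge 3$'' on $\Phi_{2,l+2}$ in the theorem by ``$l\ge 2$''; this correction is in fact forced --- the class $e_4\otimes e^2$ survives in the paper's own list of classes surviving to $E_3$ and is hit by no later differential, and a direct check of degree-$2$ derivations of $\mathfrak{m}_2$ gives a two-dimensional space modulo the one-dimensional span of $\mathrm{ad}(e_2)$ --- so the discrepancy is a typo in the theorem's statement rather than a gap in your argument.
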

\begin{remark}
One-dimensional cohomology $H^1(\mathfrak{m}_2,\mathfrak{m}_2)$
was also found in \cite{V, GKh1, Kh} and rediscovered later in
\cite{FialWag2}. The property of $\mathfrak{m}_2$ that
$H^2_q(\mathfrak{m}_2,\mathfrak{m}_2)=0, q \le -5$ was established
in \cite{FialWag}. We corrected in the present article the values
of dimensions $\dim H^2_q(\mathfrak{m}_2,\mathfrak{m}_2)$ from
\cite{FialWag2} for $q=-1,0,1$: they are all equal to one.
\end{remark}

\end{document}